\documentclass[a4paper, 11pt]{article} 

\usepackage{times}
\usepackage{graphicx}
\usepackage{amssymb,amsmath}
\usepackage{upref}
\usepackage[sf,SF]{subfigure} 
\usepackage{caption}
\usepackage{amsthm,euscript}
\usepackage[a4paper, portrait, margin=1in]{geometry}
\usepackage{xcolor}
\usepackage[bookmarks=true,hidelinks]{hyperref}
\usepackage{units}
\usepackage{bbm}
\usepackage{enumitem}
\usepackage{caption}
\captionsetup[figure]{font=small}
\usepackage[title]{appendix}

\newcommand{\R}{\mathbb{R}}

\newcommand{\Z}{\mathbb{Z}}
\renewcommand{\leq}{\leqslant}
\renewcommand{\geq}{\geqslant}
\renewcommand{\phi}{\varphi}
\newcommand{\hf}{{\unitfrac{1}{2}}}
\renewcommand{\epsilon}{\varepsilon}
\newcommand{\bx}{\mathbf{x}}
\newcommand{\by}{\mathbf{y}}
\newcommand{\br}{\mathbf{r}}
\newcommand{\bk}{\mathbf{k}}

\newcommand{\fstat}{f_\infty}

\DeclareMathOperator\erf{erf}
\newcommand{\eps}{{\varepsilon}}

\newtheorem{theorem}{Theorem}
\newtheorem{proposition}[theorem]{Proposition}

\newtheorem{remark}[theorem]{Remark}

\newtheorem{lemma}[theorem]{Lemma}

\numberwithin{equation}{section}
\numberwithin{theorem}{section}

\makeatletter
\def\@cite#1#2{\textup{[{#1\if@tempswa , #2\fi}]}}
\makeatother

\title{Noise-driven bifurcations in a neural field system \\modelling networks of grid cells}

\author{Jos\'e A. Carrillo\thanks{Mathematical Institute, University of Oxford, Oxford OX2 6GG, UK (carrillo@maths.ox.ac.uk)} \and Helge Holden\thanks{Department of Mathematical Sciences, NTNU Norwegian University of Science and Technology, NO-7491 Trondheim, Norway (helge.holden@ntnu.no)} \and Susanne Solem\thanks{Department of Mathematics, Norwegian University of Life Sciences, NO-1433 \AA s, Norway (susanne.solem@nmbu.no)}}

\begin{document}

\maketitle

\begin{abstract}
The activity generated by an ensemble of neurons is affected by various noise sources. It is a well-recognised challenge to understand the effects of noise on the stability of such networks. We demonstrate that the patterns of activity generated by networks of grid cells emerge from the instability of homogeneous activity for small levels of noise. This is carried out by analysing the robustness of network activity patterns with respect to noise in an upscaled noisy grid cell model in the form of a system of partial differential equations. Inhomogeneous network patterns are numerically understood as branches bifurcating from unstable homogeneous states for small noise levels. We show that there is a phase transition occurring as the level of noise decreases. Our numerical study also indicates the presence of hysteresis phenomena close to the precise critical noise value.
\end{abstract}

{\bf Keywords: }{grid cells, noise-driven bifurcations, neural field models, partial differential equations.

{\bf 2020 Mathematics Subject Classification:} Primary: 92B20; Secondary: 92C20, 35Q92.} 

\section{Introduction}
By now it is well established that grid cells, and the characteristic hexagonal firing patterns they create in physical space, play an important role in the navigational system of mammalian brains \cite{McNaughtonMoser}. Since grid cells were discovered in 2005 \cite{gridcells}, there has been extensive activity in order to understand their precise behaviour, see  \cite{tenyears,McNaughtonMoser} and the references therein. The main challenges ten years after the discovery of grid cells, such as how grid cells are organised and how they are connected to other cell types in the brain, were highlighted in \cite{tenyears}. In particular, as the brain is inherently noisy \cite{rolls2010noisy}, the lack of understanding of the effect of noise on grid cells was emphasised as a challenge. The recent results in \cite{Gardner2022} has provided insight into the organisation of grid cells by showing that the activity of the network (called a module in \cite{Gardner2022}) is arranged on a torus. The question regarding the effects of noise on grid cells, however, remains open. 

In accordance with previous experimental studies and general belief in the field, the results in \cite{Gardner2022} provided further evidence in favour of describing the grid cell network by continuous attractor network dynamics through a system of neural field models \cite{Ermentrout2010}. The first attractor network models for grid cells were presented in \cite{McNaughtonetal, burakfiete, coueyetal}, which were based on the classical papers \cite{WC1, WC2, Amari1977}, see also \cite{whiskerbarrel}. In \cite{burakfiete, coueyetal}, the grid cells are assumed to have orientation preferences in four different directions. The hexagonal grid cell patterns are then generated by a system of $4N^2$ neural field ordinary differential equations
\begin{align}\label{eq:motherODE}
\tau_{i}^\beta \frac{d s_i^\beta}{dt} + s_i^\beta = \Phi\left( \sum_{\beta'} \sum_j W_{ij}^{\beta'} s_j^{\beta'} + B_i^\beta (t) \right),
\end{align}
with $\beta = 1, \dots, 4$. Here $s_i^\beta \geq 0$ represents the activity level of neuron $i$ with orientation preference $\beta$, and $\tau_{i}^\beta$ is its relaxation time. The right-hand side of \eqref{eq:motherODE} represents the firing rate of the neuron, see \cite{bressloff2012}. The function $\Phi$ is a given activation function, often of the form of a ReLU or sigmoid function. The firing rate of neuron $i$ depends on an external input $B_i^\beta(t)$ and the response of the network. It is assumed that the neurons are arranged on a square, which we will denote $\Omega$ and call the neural sheet, according to the strength of their pairwise connection. The position of neuron $i$ is denoted by $\bx_i$. The strength of the connectivity between neuron $i$ of type $\beta$ and $j$ of type $\beta'$ is $W_{ij}^{\beta'} = W(\bx_i-\bx_j-\br^{\beta'})$, where $W(\bx$), $\bx \in \Omega$, is assumed to be even in each coordinate, in $\Omega$. The connectivity is shifted in the direction of the orientation preference of neuron $j$ of type $\beta'$ with $\br^{\beta'}$ which is given by shifts of equal length in the four cardinal directions: north, south, east and west. It has been commonly considered, and, as mentioned, recently shown in \cite{Gardner2022}, that the network of neurons creates a torus connectivity. This is realised in the model by assuming that $W$ is extended periodically outside $\Omega$.

 By modelling the movement of a rat traversing physical space through the input $B_i^\beta(t)$, \eqref{eq:motherODE} can recreate the hexagonal patterns in physical space produced by the firing of a single grid cell as observed in experiments \cite{coueyetal}. In this model, the patterns in physical space are a consequence of the patterns generated on the neural sheet $\Omega$. However, \eqref{eq:motherODE} being a deterministic model, it does not offer much insight into the effects of noise. 
 
Works on understanding noisy neural fields have in general been lacking \cite[Sec. 6]{bressloff2012} until recently \cite{TouboulPhysD,KE13,K14,MB, TouboulSIAM, B19, BAC19}. In \cite{BurakFieteNoise} fundamental limits on how information dissipates in networks of noisy neurons were derived. The author in \cite{K14} presents a study of two coupled noisy neural field models with a focus on the consequences of the coupling on the neural activity waves, while \cite{KE13,MB} investigate the effect of noise on stationary bumps in one-dimensional spatially extended networks.

Taking a different perspective than \cite{BurakFieteNoise, KE13, K14, MB}, by adding noise to the common model of a grid cell network \eqref{eq:motherODE}, the main goal of this work is to analyse the robustness of the hexagonal patterns in the activity level \cite{EC1979} observed in \eqref{eq:motherODE} with respect to noise strength. We show that the stationary spatial patterns of the activity level emerge from the instability of homogeneous brain activity as the noise level diminishes. By upscaling the model \eqref{eq:motherODE} with noise to a system of Fokker--Planck-like partial differential equations, our analysis gives an estimate on the noise strength above which there is no coherent activity pattern. We also numerically explore the different branches of inhomogeneous stationary patterns bifurcating from the homogeneous state depending on the noise for several activation functions $\Phi$ indicating the presence of hysteresis phenomena. 
 
Instabilities of homogeneous steady states of noisy neural fields were also investigated in \cite{BAC19} utilising a partial differential equation (PDE) description. However, the PDEs were of a very different form than the ones presented in this manuscript. A Fokker--Planck-like system describing a network of noisy neurons can be found in \cite{B19}, where neural variability in a coupled ring network was studied.

It is classical to analyse the behavioural change of neural fields without noise in the form of ordinary differential equations (ODEs) by standard bifurcation analysis \cite{Murray03,bressloff2012,VCF15,KP17,SA20}. Finding noise-driven bifurcations is more challenging, and one has to rely on other technical tools unless the coupling of the network has a particular structure where closed ODEs for the mean and the variance are available \cite{TouboulSIAM,TouboulPhysD}. The system \eqref{eq:motherODE} with noise has, in addition, a nonlinear coupling, and the activity levels must remain nonnegative due to their physical interpretation, leading to technical additional constraints on the stochastic processes involved. In the following we deal with these challenges by analysing a system of Fokker--Planck-like partial differential equations with boundary conditions describing the space-time evolution of the law of the stochastic processes with respect to the noise level. 

\section{The PDE system: derivation, main goal, and numerical experiment}

For the sake of the reader, we start by discussing the simplest classical case of no spatial connectivity \cite{Hopfield84,bressloff2012} and the references therein. Let us consider the classical neural field stochastic dynamical system  for a network of $M$ coupled neurons given by
\begin{align}\label{eq:SDEneuralfield}
\tau d s_{k} + s_{k} dt = &\,\Phi \Bigg(\frac{W_{0}}{M}\sum_{k'} s_{k'} + B(t) \Bigg) dt + \sqrt{2\sigma} d\mathcal{W}_{k}.
 \end{align}
Here, the neurons are considered indistinguishable and all-to-all coupled with equal strengths given by $W_0 \in \R$ whose sign depends on the type of neurons considered: inhibitory or excitatory. We also consider that the relaxation time for all neurons is the same and equal to $\tau$. $B(t)$ is the external input for this neural network and $\sigma>0$ is the strength of the noise $\mathcal{W}_k$. We have considered independent Brownian motion for each neuron in the network. Classical stochastic analysis implies that we can derive a Fokker--Planck equation for the evolution of the probability density of neurons with activity level $s$ at time $t$ in the large population limit $M\to\infty$, i.e., the law of the limiting stochastic process follows the PDE
\begin{equation}\label{eq:PDEhom-noBC}
\tau \frac{\partial f}{\partial t} =
\frac{\partial}{\partial s}\left(
\left[s-\Phi\big(W_0\langle f \rangle + B(t) \big) \right] f
\right) + \sigma \frac{\partial^2 f}{\partial s^2},
\end{equation}
where $f=f(t,s)$ denotes the probability to observe the activity $s$ at time $t$, and $\langle f \rangle$ denotes the mean value of the activity level $s$
\begin{align*}
\langle f \rangle = \int_{0}^\infty s f (s)\, ds.
\end{align*}
Notice that the noise can drive the activity level to be negative in \eqref{eq:SDEneuralfield}, which is clearly not desirable from the modelling viewpoint. In order to avoid this, it is common practise to consider the Fokker--Planck equation \eqref{eq:PDEhom-noBC} on $s\in [0,\infty)$ with no-flux boundary conditions
\begin{equation}\label{eq:PDEhomBC}
\Big( \Phi\big(W_0\langle f \rangle + B(t) \big) f - \sigma \frac{\partial}{\partial s} f \Big) \bigg|_{s=0}   = 0.
\end{equation}
This ensures that particles cannot escape from non-negative values of the activity level variable $s$ at the PDE level while keeping an evolution of a probability density, see \cite{CCM11,CCM13} for instance. 

\begin{remark}[Microscopic Model]
\label{rem:microscopic}
Reflective boundary conditions for stochastic processes have been incorporated at the stochastic differential equation (SDE) level in order to avoid particles to escape a fixed domain \cite{Sznitman1984,LS84,FI15}. One can produce a microscopic stochastic process by adding an additional process counting when particles touch the boundary of the domain. The law of the rigorous mean-field limit, $M\to\infty$, of the following system 
\begin{align}\label{eq:SDE1hom}
\tau d s_{k} + s_{k} dt = &\,\Phi \Bigg(\frac{W_{0}}{M}\sum_{k'} s_{k'} + B(t) \Bigg) dt+ \sqrt{2\sigma}\, d\mathcal{W}_{k} - dl_{k}, \\
 l_{k}(t) & = -|l_{k}|(t), \quad |l_{k}| (t) = \int_0^t 1_{\{s_{k}(\zeta) = 0\}}d|l_{k}|(\zeta),
\end{align}
$k =1,\dots, M$, follows the evolution of \eqref{eq:PDEhom-noBC}--\eqref{eq:PDEhomBC} under suitable smoothness assumptions on $\Phi$, see \cite{LS84,Sznitman1991}. 
\end{remark}

The next step in the modelling is to reinterpret $M$ as the number of neurons in each of the cortical columns of a neural sheet of $N$ columns. 
Given space points $x_1,\dots,x_N$ in the region $\Omega$ of the neural cortex, the interaction among $NM$ neurons stacked in $N$ columns at locations $x_i$ with $M$ neurons each, where $s_{ik}^\beta$ represents the activity level with orientation $\beta$ of the $k^{th}$ neuron at location $x_i$ is given for $i=1,\dots,N$ and $k=1,\dots,M$ by
\begin{subequations}
\label{concrete model with reflection term}
\begin{align}
        \tau ds^\beta_{ik} + s_{ik}^\beta dt =&\, \Phi\left(\frac{1}{4N M}\sum_{\beta'=1}^4\sum_{j=1}^{N}\sum_{m=1}^{M} W^{\beta'}(x_i-x_j)s_{jm}^{\beta'} + B^\beta(t)\right) dt + \sqrt{2\sigma} d\mathcal{W}_{ik}^\beta-d\ell^\beta_{ik},
        \label{concrete model particle system} \\
        \ell_{ik}^{\beta}(t)= &\, -\big|\ell_{ik}^{\beta}\big|(t),\quad  \big|\ell_{ik}^{\beta}\big|(t)=\int_0^t1_{\{s_{ik}^\beta(r)=0\}}d\big|\ell_{ik}^{\beta}\big|(r)\quad\text{for }\beta=1,2,3,4.
    \label{reflection term concrete model particle system}
  \end{align}
  \end{subequations}
Here, we consider the same periodic setting, imposed through the periodicity of the interactions $W^{\beta}$ for $\beta=1,\dots,4$, as in \cite{burakfiete,coueyetal}. Moreover, the neurons are inhibitory \cite{coueyetal} and the activity in the network is modulated by a time dependent external input as in \eqref{eq:motherODE}. We are dealing with a population network of neurons structured by their orientation preference $\beta=1,\dots,4$ corresponding to the four cardinal points (north, west, south, east). The network population includes the localised in space cross inhibition of neurons with different orientations modulated by the shape function $W^\beta$, where $W^\beta(\bx)=W(\bx-{\bf r}^\beta)$. Following the approach outlined above in the case of one population, we can formally write a Fokker--Planck type equation, in the limit $N,  M \to \infty$, for the evolution of the probability density $f^\beta(t,\bx,s)$ of finding neurons of type $\beta$ at position $\bx$ on the neural sheet $\Omega$ with activity level $s\geq 0$ at time $t\geq 0$. We refer to \cite{CTRM06} for a similar approach in conductance-voltage models. The system of equations reads
\begin{align}\label{eq:PDE}
\tau \frac{\partial f^\beta}{\partial t} =
-\frac{\partial}{\partial s}\Bigg(
\Big[\Phi^\beta(\bx) -s\Big] f^\beta
\Bigg) + \sigma \frac{\partial^2 f^\beta}{\partial s^2},
\end{align}
where $\Phi^\beta(\bx)$ is given by
\begin{align*}
\Phi \left(\frac{1}{4}\sum_{\beta'} \int_\Omega W^{\beta'}(\bx-\by) \langle f^{\beta'} \rangle (t,\by)d\by + B^\beta (t) \right),
\end{align*}
with
\begin{align*}
\langle f^{\beta} \rangle (t,\bx) &= \int_{0}^\infty s f^{\beta} (t,\bx,s)\, ds, \quad \beta=1,\dots,4,
\end{align*}
periodic boundary conditions in $\bx$, and the no-flux boundary conditions at $s=0$ given by 
\begin{align}\label{eq:PDEbc}
\Bigg( \Phi^\beta(\bx) f^\beta - \sigma \frac{\partial}{\partial s} f^\beta \Bigg) \Bigg|_{s=0}\!\!\!\!\!\!\!\!\! = 0, \quad \beta=1,\dots,4,
\end{align}
for each position $\bx$ in the square sheet $\Omega$. To realise the torus connectivity, we assume that $W$ is periodic with respect to $\Omega$ and even in each coordinate on $\Omega$. The function $\Phi$ is typically a smooth approximation of the ReLU activation function $(x)^+= \max\{0,x\}$ or a sigmoid function. The initial probability density of the system \eqref{eq:PDE} is denoted by $f_0^\beta$.

\begin{remark}
The system of Fokker--Planck equations \eqref{eq:PDE}--\eqref{eq:PDEbc} can be rigorously derived from the microscopic stochastic processes \eqref{concrete model with reflection term} under suitable assumptions. The rigorous proof of this mean-field limit for the spatially extended system \eqref{eq:PDE}--\eqref{eq:PDEbc} has recently been obtained in \cite{CCS21} by a generalisation of the coupling method of Sznitman \cite{Sznitman1991}. This rigorous passage to the limit is a very interesting area of mathematical research on its own with a multitude of different models and limiting systems derived under different assumptions on the ingredients of the network. For instance, we refer to the works \cite{MS02, FTC09,FI15,TouboulPhysD,TouboulSIAM,CT18} in which the authors deal with spatially extended systems of neural networks modelled by their voltage with random connectivity interactions using large deviation principles \cite{AG95,G97}.
\end{remark}

To summarise, the main goal of this work is to focus on the biological information carried by the system of PDEs \eqref{eq:PDE}--\eqref{eq:PDEbc}. More precisely, we study how noise affects the dynamics of \eqref{eq:PDE}--\eqref{eq:PDEbc} under the following assumptions: 
\begin{enumerate}[label=(A{\arabic*})]
\setlength\itemsep{0.2em}
    \item \label{assumptionW1} The grid cells are arranged on a torus, realised by setting the neural sheet $\Omega=[-0.5,0.5]^2$ and extending $W$ periodically outside $\Omega$.
    \item  \label{assumptionW2} The inhibitory \cite{coueyetal} connectivity function $W \leq 0$ is at least in $L^2(\Omega)$, and is an even function in each coordinate on $\Omega$. Furthermore, we define $W_0 = \int_\Omega W(\bx)d\bx$. In the numerical experiments $W$ satisfies $W(\bx)=W(|\bx|)$ in addition.
    \item \label{assumptionphi} The modulation function $\Phi$ is in $C^1$ (unless otherwise stated).
    \item  \label{assumptionrbeta} There are four orientation preferences, $\beta=$ $1$ (north), $2$ (west), $3$ (south), and $4$ (east), where the shifts are of equal size $z$ in each direction, i.e., $\br^\beta = z{\bf e}_\beta$, where ${\bf e}_\beta$ is the unit vector in direction $\beta$. 
\end{enumerate}

It is well-known that grid cell firing is strongly connected to mammals' navigation, but unknown exactly how the grid cell network communicates with other networks in the brain. We will therefore simply assume in the numerical experiments that the external input $B^\beta(t)$ in \eqref{eq:PDE}--\eqref{eq:PDEbc} depends on the velocity at time $t$, $v(t)$, of a moving animal in the following manner (see \cite{burakfiete,coueyetal}):
\begin{align}\label{eq:ext}
    B^\beta(t) = B + \alpha v(t) \cos (\theta (t) - \theta^\beta),
\end{align}
where $B>0$ is a constant external excitatory input, assumed to be the same for different $\beta$, $\alpha$ the velocity modulation, $\theta (t)$ the orientation of the animal at time $t$ according to the reference frame, and $\theta^\beta$ the orientation preference of the neurons of type $\beta$ ($\theta^1=\frac{\pi}{2}, \theta^2=\pi, \theta^3=\frac{3}{2}\pi, \theta^4=2\pi$). This particular form of the input, together with the right set of parameters in \eqref{eq:motherODE}, has been shown to enable single cells of the ODE system \eqref{eq:motherODE} to create hexagonal firing patterns in physical space, see \cite{burakfiete, coueyetal}.

\subsection{Numerical reproduction of the hexagonal patterns}
We numerically demonstrate that the PDE system \eqref{eq:PDE}--\eqref{eq:PDEbc} with \eqref{eq:ext} is able to reproduce the characteristic single-cell hexagonal firing pattern as discovered in \cite{gridcells} for rats and see how this pattern depends on the noise strength $\sigma>0$. For this, we use a numerical scheme that has been extensively utilised for Fokker--Planck like equations \cite{CCH}. For more details on the numerical approach and its validation, see Appendix \ref{app:numerics}. Before connecting the grid cell system \eqref{eq:PDE}--\eqref{eq:PDEbc} with the movement of a rat, we initialise the activity on the neuronal sheet $\Omega$ by running the simulation with $\alpha=0$ until $f^\beta$ has numerically stabilised into stationary patterns equal to the ones in the top and middle rows of Fig.~\ref{fig:single}, modulo translations.

\begin{figure}
\centering
\subfigure[$\sigma = 0.001$]
{
\includegraphics[trim={0cm 2.5cm 0cm 2.5cm},clip, width=0.22\textwidth]{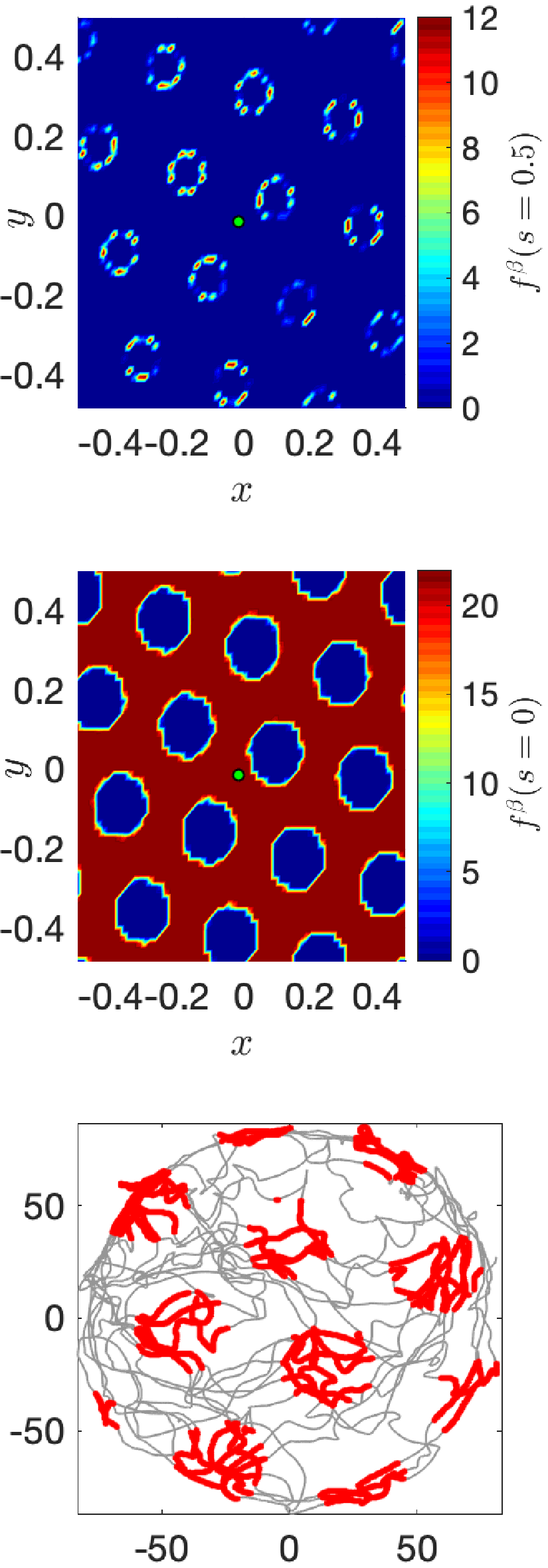}}
\subfigure[$\sigma = 0.005$.]
{
\includegraphics[trim={0cm 2.5cm 0cm 2.5cm},clip,width=0.22\textwidth]{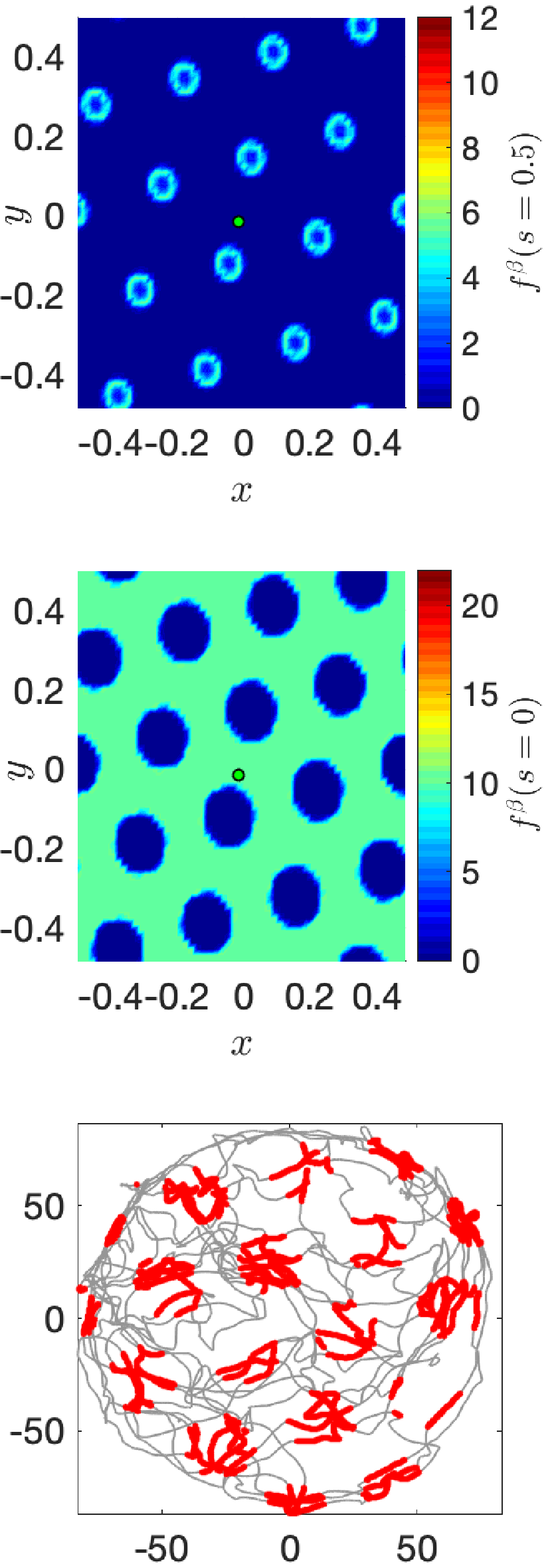}}
\subfigure[$\sigma = 0.015$.]
{
\includegraphics[trim={0cm 2.5cm 0cm 2.5cm},clip,width=0.22\textwidth]{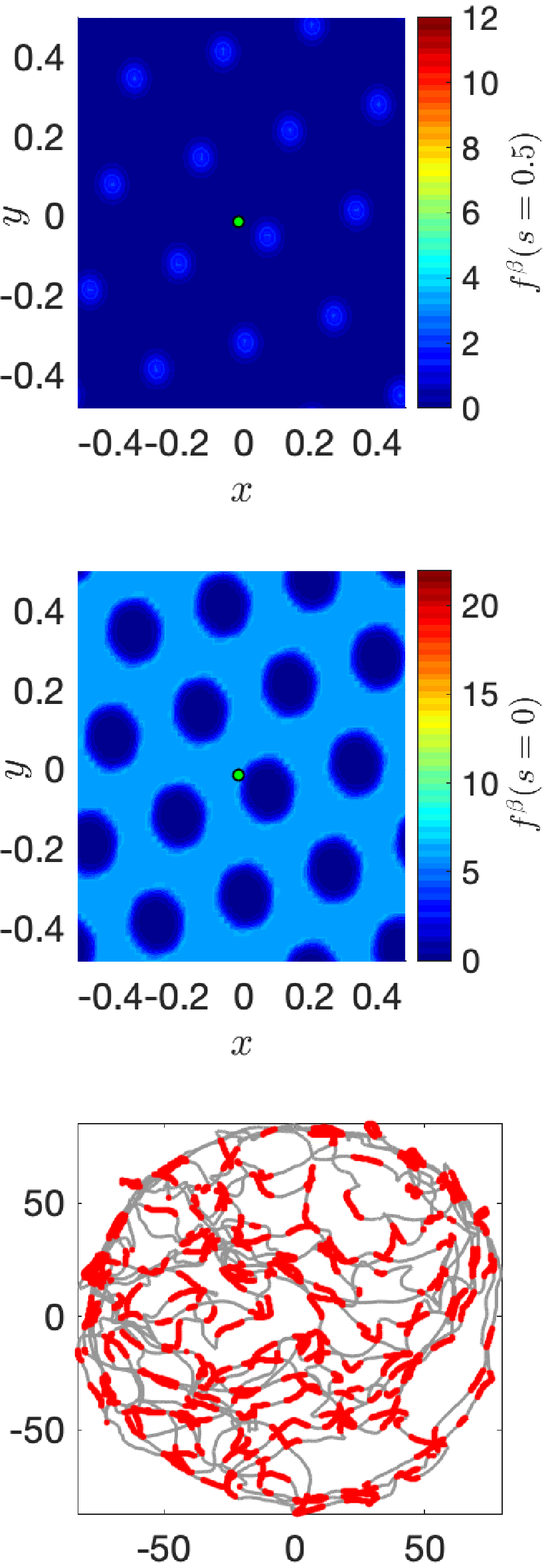}}
\subfigure[$\sigma = 0.02$]
{\includegraphics[trim={0cm 2.5cm 0cm 2.5cm},clip,width=0.22\textwidth]{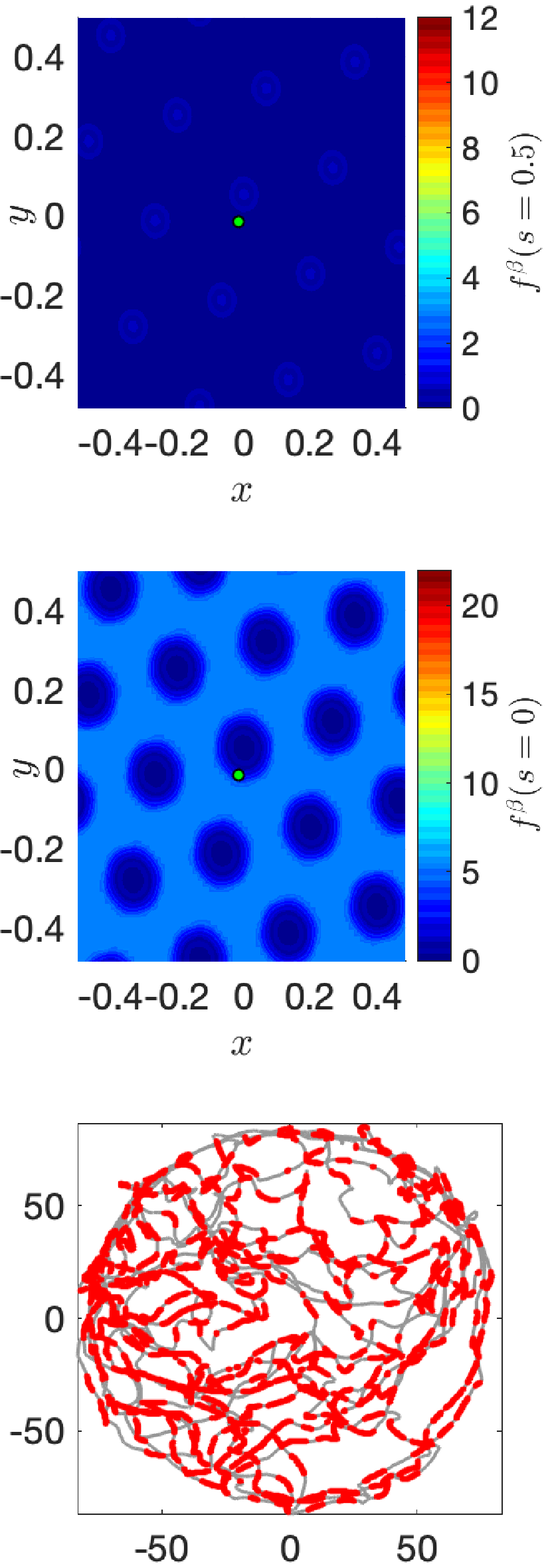}}
\caption{Network patterns and single-cell responses after $t=5$ minutes for increasing noise strength $\sigma$ (left to right $\sigma = 0.001, 0.005, 0.015, 0.02$) with the modulation function used in \cite{coueyetal}: $\Phi(x)=(x)^+$ and \eqref{eq:ext} with $I=3$ and $\alpha=0.3$. Top row: the probability density $f^\beta(x,y,s=0.5)$ for one $\beta$, $(x,y) \in \Omega$, middle row: $f^\beta(x,y,s=0)$, and bottom row: single-cell firing pattern in a circular enclosure traversed by a rat (units in cm) created by the cell at position $(0,0)$ on the neural sheet $\Omega$ (green dot in top and middle plots).}
\label{fig:single}
\end{figure} 

Then we connect with the rats movement by setting $\alpha=0.3$ in \eqref{eq:ext} as in \cite{coueyetal}. The velocity $v(t)$ and orientation $\theta(t)$ used in the numerical experiment are calculated using timestamped position data from the physical experiments in \cite{gridcells} where rats moved around in a circular enclosure with a radius of $80$ cm. The shape of the enclosure can be seen in the plots in the bottom row of Fig.~\ref{fig:single}. The path of the rat after moving around for $t=5$ minutes, generated with the position data, is visualised in grey.

The red coloured areas in each plot in the bottom row in Fig.~\ref{fig:single} make up the firing field of a grid cell in the network. The firing field consists of smaller red circular areas, which again are made up by even smaller red dots. Each dot marks a firing of the grid cell placed at position $(0,0)$ on the neuronal sheet $\Omega$ as the velocity and orientation data is fed into the numerical method of \eqref{eq:PDE}--\eqref{eq:PDEbc} through \eqref{eq:ext}. For simplicity, we have assumed in the numerical experiment that a neuron at a particular position $\bx \in \Omega$ fires as soon as the firing rate $\Phi$ in \eqref{eq:PDE}--\eqref{eq:PDEbc} satisfies $\Phi(\bx) > 0$.

Now, from left to right in the bottom row of Fig.~\ref{fig:single}, we see the pattern of the firing fields---the patterns created by the red dots over the path of the rat in the enclosure in physical space---for the grid cell at $(0,0)$ for increasing noise strength.

The top and middle rows in the figure display snapshots of the probability density $f^\beta$ at $s=0.5$ and $s=0$, respectively. As $t$ increases, these patterns are translated in accordance with the movement of the rat.

As can be observed in the bottom row, the red fields generated with the PDE system \eqref{eq:PDE}--\eqref{eq:PDEbc} form hexagonal patterns similar to the ones observed in physical experiments. However, the distance between the activity bumps and the area they cover decreases as the noise strength increases. The second main observation is that by increasing the noise the firing becomes less and less localised. The numerical experiments support the existence of a critical value of the noise, $\sigma_c>0$, at which a single cell could fire no matter where the rat is on its path.

The question we will address in the following is how stable the patterns observed in Figure \ref{fig:single} are with respect to the noise strength $\sigma$. 

\section{Stability of the neural field system}\label{sec:stability}

In this section, we start by studying the spatially homogeneous solutions. We would like to understand the pattern formation in the neural field system \eqref{eq:PDE}--\eqref{eq:PDEbc} as a byproduct of the instability of these homogeneous solutions. We assume that solutions to \eqref{eq:PDE} are sufficiently smooth and decay fast enough as $s \to \infty$. We further assume from now on that $B^\beta=B$ is constant, i.e., $\alpha=0$ in \eqref{eq:ext}, in order to study the emergence of stationary network patterns of the system. Note that setting $B^\beta$ to different constant values depending on $\beta$ could yield non-stationary network patterns: a rat running with constant speed in one direction would give $v(t) =const>0$ and $\theta (t) =const$ in \eqref{eq:ext}, which results in different constant values of $B^\beta$. This would, with the right set of parameter values, consequently translate the network patterns in time. To avoid this technicality, we let $B$ be identical for the four different direction preferences.

Homogeneous solutions $f^\beta (t,\bx,s) = f(t,s)$ for $\beta = 1, \dots, 4$ to \eqref{eq:PDE} satisfy
\begin{equation}\label{eq:PDEhom}
\tau \frac{\partial f}{\partial t} =
\frac{\partial}{\partial s}\left(
\left[s-\Phi\big(W_0\langle f \rangle + B \big)\right] f
\right) + \sigma \frac{\partial^2 f}{\partial s^2},
\end{equation}
with no-flux boundary conditions \eqref{eq:PDEbc} at $s=0$, i.e., \begin{align}\label{eq:PDEbc-hom}
\Bigg( \Phi^\beta\big(W_0 \langle f \rangle + B \big) f^\beta - \sigma \frac{\partial}{\partial s} f^\beta \Bigg) \Bigg|_{s=0}\!\!\!\!\!\!\!\!\! = 0, \quad \beta=1,\dots,4,
\end{align} and
\begin{align*}
W_0 = \int_\Omega W(\bx) d\bx.
\end{align*}
In order to find stationary spatially homogeneous states $\fstat$ we first assume that their mean $\langle \fstat  \rangle$ is given. Denote by $\Phi_0 = \Phi\big(W_0 \langle \fstat  \rangle + B\big)$ the corresponding firing rate for simplicity. Thus,  by integrating \eqref{eq:PDEhom} and using the boundary condition \eqref{eq:PDEbc-hom}, the stationary spatially homogeneous states $\fstat (s)$ are given by 
\begin{align}\label{eq:stat}
\fstat (s) = \frac{1}{Z}  \exp \left( - \frac{(s-\Phi_0)^2}{2\sigma}\right),
\end{align}
with the mass normalisation factor $Z$ such that $\int_0^\infty \fstat (s) ds=1$, i.e., 
\begin{align}\label{eq:stat2}
Z = \sqrt{\frac{\pi \sigma}{2}}\left( 1+\erf\left(\frac{\Phi_0}{\sqrt{2\sigma}}\right)\right),
\end{align}
where the error function has been defined as
\begin{align*}
    \erf(x) = \frac{2}{\sqrt{\pi}} \int_0^x \exp(-y^2)\,dy.
\end{align*}
However, note that \eqref{eq:stat} is an implicit equation as $\Phi_0$ depends on the mean $\langle \fstat  \rangle$.
To show the existence of stationary solutions, we need to solve the consistency equation for the mean 
$\langle \fstat  \rangle$ given by 
\begin{align}\label{eq:spathomsol}
\langle \fstat  \rangle = \Phi_0 + \sigma \frac{1}{Z} \exp \left( -\frac{\Phi_0^2}{2\sigma}\right) \,.
\end{align}
We prove next that the stationary state exists and is unique by leveraging on \eqref{eq:spathomsol} under suitable conditions on the firing rate function $\Phi$.

\begin{proposition}\label{prop:exisuniquemean}
Let $ \sigma > 0$, $W_0 \leq 0$, and $0\leq \Phi(x) \leq \Phi(B)$ for any $x \leq B$. Assume \ref{assumptionphi}, and that $\Phi$ satisfies $\Phi'(W_0 m +B) > \frac{1}{W_0}$
for all $m\geq 0$, 
then \eqref{eq:PDEhom} has a unique stationary solution $\fstat$ defined by 
\eqref{eq:stat}--\eqref{eq:spathomsol}.
\end{proposition}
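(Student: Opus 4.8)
The plan is to reduce the whole assertion to a scalar problem. As derived just before the statement, every stationary spatially homogeneous solution of \eqref{eq:PDEhom} satisfying the no-flux condition \eqref{eq:PDEbc-hom} is of the Gibbs form \eqref{eq:stat} with $Z$ given by \eqref{eq:stat2}, so the only remaining freedom is the value of the mean $m:=\langle\fstat\rangle\ge 0$, which must solve the consistency equation \eqref{eq:spathomsol}. Hence it suffices to show that \eqref{eq:spathomsol} has exactly one solution $m\ge0$. Since the hypothesis $\Phi'(W_0m+B)>1/W_0$ only makes sense for $W_0\neq0$, I take $W_0<0$ throughout; the degenerate case $W_0=0$, in which \eqref{eq:spathomsol} reads $m=\Phi(B)+\sigma Z^{-1}e^{-\Phi(B)^2/(2\sigma)}$ with $Z$ a fixed number, determines $m$ outright.

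First I would package \eqref{eq:spathomsol} as a fixed-point problem. Define
\[
F(u):=\frac{\int_0^\infty s\,e^{-(s-u)^2/(2\sigma)}\,ds}{\int_0^\infty e^{-(s-u)^2/(2\sigma)}\,ds},\qquad u\in\R,
\]
that is, $F(u)$ is the mean of a Gaussian of variance $\sigma$ centred at $u$ and truncated to $[0,\infty)$. By \eqref{eq:stat2}--\eqref{eq:spathomsol}, $F(\Phi_0)$ is precisely the right-hand side of \eqref{eq:spathomsol}, so solving \eqref{eq:spathomsol} is equivalent to finding a fixed point of $T(m):=F\big(\Phi(W_0m+B)\big)$ on $[0,\infty)$. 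The two properties of $F$ I need are: (i) $F$ is smooth and $F(u)>0$ for every $u$; and (ii) $0<F'(u)<1$ for every $u$. Property (i) is immediate. For (ii), differentiating under the integral sign and writing $\langle\cdot\rangle_u$ for the average against $\frac1Z e^{-(s-u)^2/(2\sigma)}$ gives $F'(u)=\sigma^{-1}\big(\langle s^2\rangle_u-\langle s\rangle_u^2\big)=\sigma^{-1}\mathrm{Var}_u(s)>0$; the upper bound $F'(u)<1$ is the (strict) statement that truncating a Gaussian to a half-line decreases its variance, which I would verify from the closed form of $F$ in \eqref{eq:spathomsol}--\eqref{eq:stat2} together with the classical fact that the Gaussian inverse Mills ratio strictly exceeds its argument; crucially the bound is uniform in $u\in\R$.

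Next I would differentiate $T$. By \ref{assumptionphi}, $T\in C^1$ with $T'(m)=W_0\,\Phi'(W_0m+B)\,F'\big(\Phi(W_0m+B)\big)$. Multiplying the hypothesis $\Phi'(W_0m+B)>1/W_0$ by $W_0<0$ and flipping the inequality yields $W_0\,\Phi'(W_0m+B)<1$ for all $m\ge0$. Combining this with $0<F'<1$: if $W_0\Phi'(W_0m+B)\le0$ then $T'(m)\le0$, and if $W_0\Phi'(W_0m+B)>0$ then $0<T'(m)<W_0\Phi'(W_0m+B)<1$. In all cases $T'(m)<1$ on $[0,\infty)$, so $g(m):=m-T(m)$ satisfies $g'(m)=1-T'(m)>0$; thus $g$ is strictly increasing and \eqref{eq:spathomsol} has at most one solution.

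Finally, for existence I would read off the endpoint behaviour of $g$: for every $m\ge0$ we have $W_0m+B\le B$, hence $0\le\Phi(W_0m+B)\le\Phi(B)$ by hypothesis, and since $F$ is positive and increasing this gives $0<T(m)\le F(\Phi(B))$ uniformly in $m$. Therefore $g(0)=-T(0)<0$ while $g(m)\ge m-F(\Phi(B))\to+\infty$ as $m\to\infty$, so by the intermediate value theorem $g$ vanishes at a unique point $m^\ast>0$; inserting $\langle\fstat\rangle=m^\ast$ into \eqref{eq:stat}--\eqref{eq:stat2} yields the unique stationary homogeneous solution. I expect the only genuinely delicate point to be the uniform estimate $F'(u)<1$ of step (ii) — the variance-reduction property of half-line truncation of a Gaussian, uniformly in the centre $u$; once that is in hand the remainder is monotonicity bookkeeping and a one-line continuity argument.
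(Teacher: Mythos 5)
Your proof is correct and follows essentially the same route as the paper's: both reduce the problem to showing that $m \mapsto \mathrm{RHS}(m)-m$ in \eqref{eq:spathomsol} is strictly monotone via the bound $0\le F'\le 1$ on the derivative of the truncated-Gaussian mean (the paper's function $g(\eta)$ is precisely your $F'$ after rescaling, and its claim $\sup_{\eta}g=1$ is your bound), and then conclude existence by a sign change at the endpoints. The only real difference is presentational: identifying $F'(u)$ as $\sigma^{-1}\operatorname{Var}_u(s)$ and invoking variance reduction under half-line truncation (the Mills-ratio bound) is a cleaner probabilistic justification of the step the paper dismisses as ``not difficult to check.''
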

\begin{proof}
Define for $m\geq 0$ and $\sigma>0$ the function
\begin{align*}
G(m, \sigma) = \Phi(W_0m+B) + \sigma \frac{1}{Z} \exp \left( -\frac{\Phi^2(W_0m+B)}{2\sigma}\right) - m.
\end{align*}
First, notice that $G(m, \sigma)$ satisfies $G(0,\sigma)>0$ and 
$$
G(m,\sigma) \leq \Phi(W_0m+B) + \sqrt{\frac{\sigma}{2 \pi}}- m \leq  \Phi(B) + \sqrt{\frac{\sigma}{2 \pi}} - m<0
$$ 
for $ m > \Phi (B) + \sqrt{\frac{\sigma}{2 \pi}}$. We now compute
$$
\frac{\partial G}{\partial m}(m,\sigma) = -1 +\Phi'(W_0m+B)W_0\, g\left(\frac{\Phi(W_0m+B)}{\sqrt{2\sigma}}\right)
$$
with 
$$
g(\eta)= \left( 1-\frac{2}{\sqrt{\pi}}\frac{\exp (-\eta^2)}{1+\erf (\eta)}\left[\frac{1}{\sqrt{\pi}}\frac{\exp (-\eta^2)}{1+\erf (\eta)}+\eta\right]\right).
$$
It is not difficult to check that the supremum of $g(\eta)$ over $\eta \in [0,\infty)$ is given by 
$$
\alpha = \sup_{\eta\geq 0}\left( 1-\frac{2}{\sqrt{\pi}}\frac{\exp (-\eta^2)}{1+\erf (\eta)}\left[\frac{1}{\sqrt{\pi}}\frac{\exp (-\eta^2)}{1+\erf (\eta)}+\eta\right]\right)=1\,.
$$ 
Our assumptions on $\Phi$ and $W_0\leq 0$ imply that $\frac{\partial G}{\partial m}(m,\sigma)<0$, and then we obtain the desired unique zero of $G$ defining our stationary state $\fstat$ through \eqref{eq:stat}--\eqref{eq:spathomsol}.
\end{proof}

\begin{remark}\label{rem:regular}
Notice that the previous proposition can also be applied for functions $\Phi$ admitting negative values as $\frac{\exp(-\eta^2)}{1+\erf (\eta)}$ behaves like $-\sqrt{\pi}\eta$ in the limit $\eta \to - \infty$, such that one can show
$$
0\leq \left( 1-\frac{2}{\sqrt{\pi}}\frac{\exp (-\eta^2)}{1+\erf (\eta)}\left[\frac{1}{\sqrt{\pi}}\frac{\exp (-\eta^2)}{1+\erf (\eta)}+\eta\right]\right) \leq 1
$$
for any $\eta \in \R$.
For instance, the theorem is valid for the $\varepsilon$-approximation of $\Phi (x) = (x)^+$ defined by 
\begin{equation}\label{eq:modul}
\Phi_\varepsilon(x) = 0.5x\left(1 +\frac{x}{\sqrt{x^2+\varepsilon}}\right)    
\end{equation}
for $\varepsilon$ small enough such that $\Phi_\varepsilon'(x) \geq \frac{1}{W_0}$. The smooth approximation $\tilde \Phi_\varepsilon(x) = 0.5(x+\sqrt{x^2+\varepsilon})$ of $\Phi (x) = (x)^+$ can also be used as $\tilde \Phi_\varepsilon(x)$ trivially satisfies the hypotheses of Proposition \ref{prop:exisuniquemean} since $\tilde \Phi_\varepsilon(x)>0$, and it is strictly increasing. 
\end{remark}

\begin{figure}[ht]
\centering
\subfigure
{\includegraphics[trim={0cm 0.cm 0cm 0cm},clip, width=0.35\textwidth]{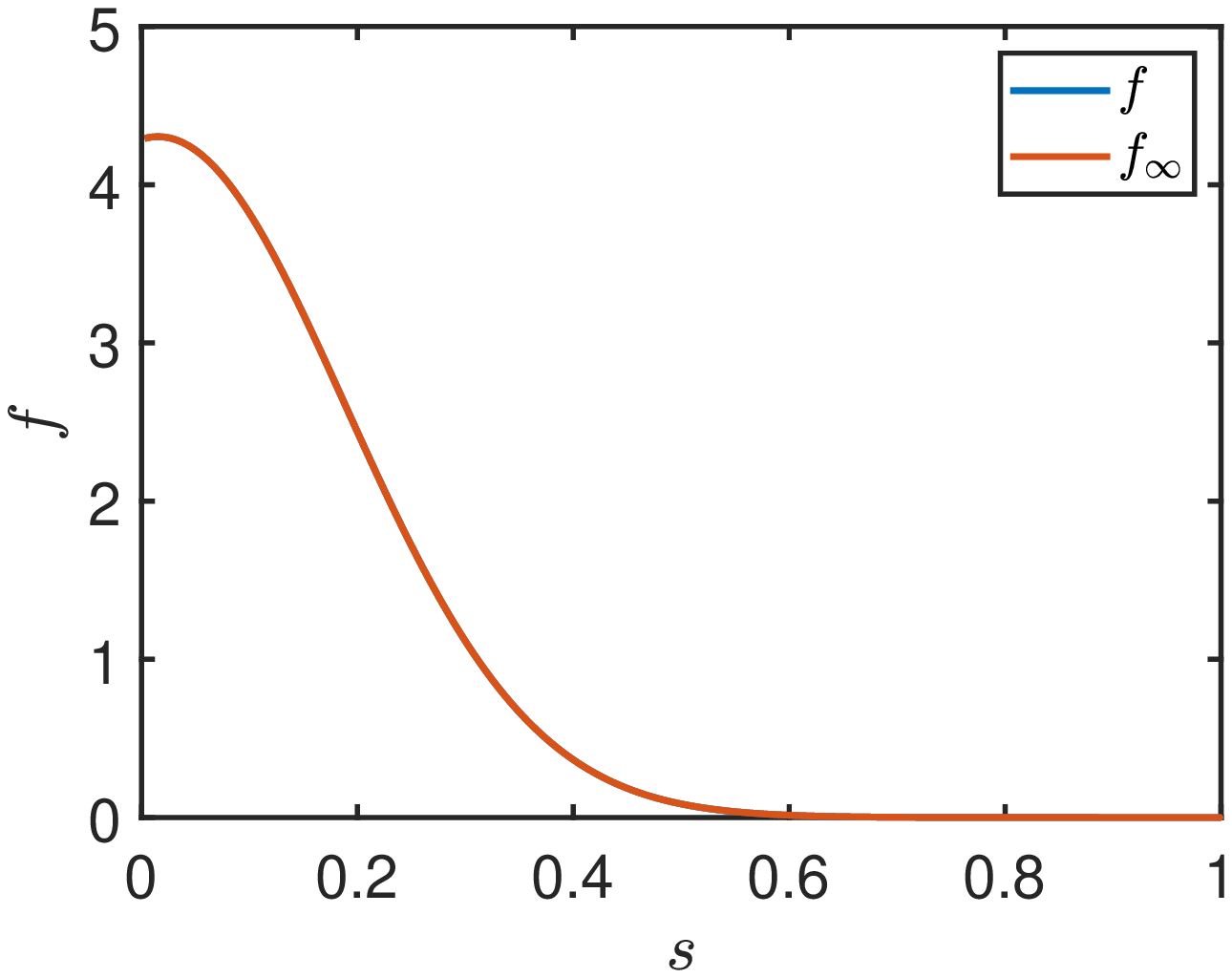}}
\subfigure
{\includegraphics[trim={0cm 0.cm 0cm 0cm},clip,width=0.35\textwidth]{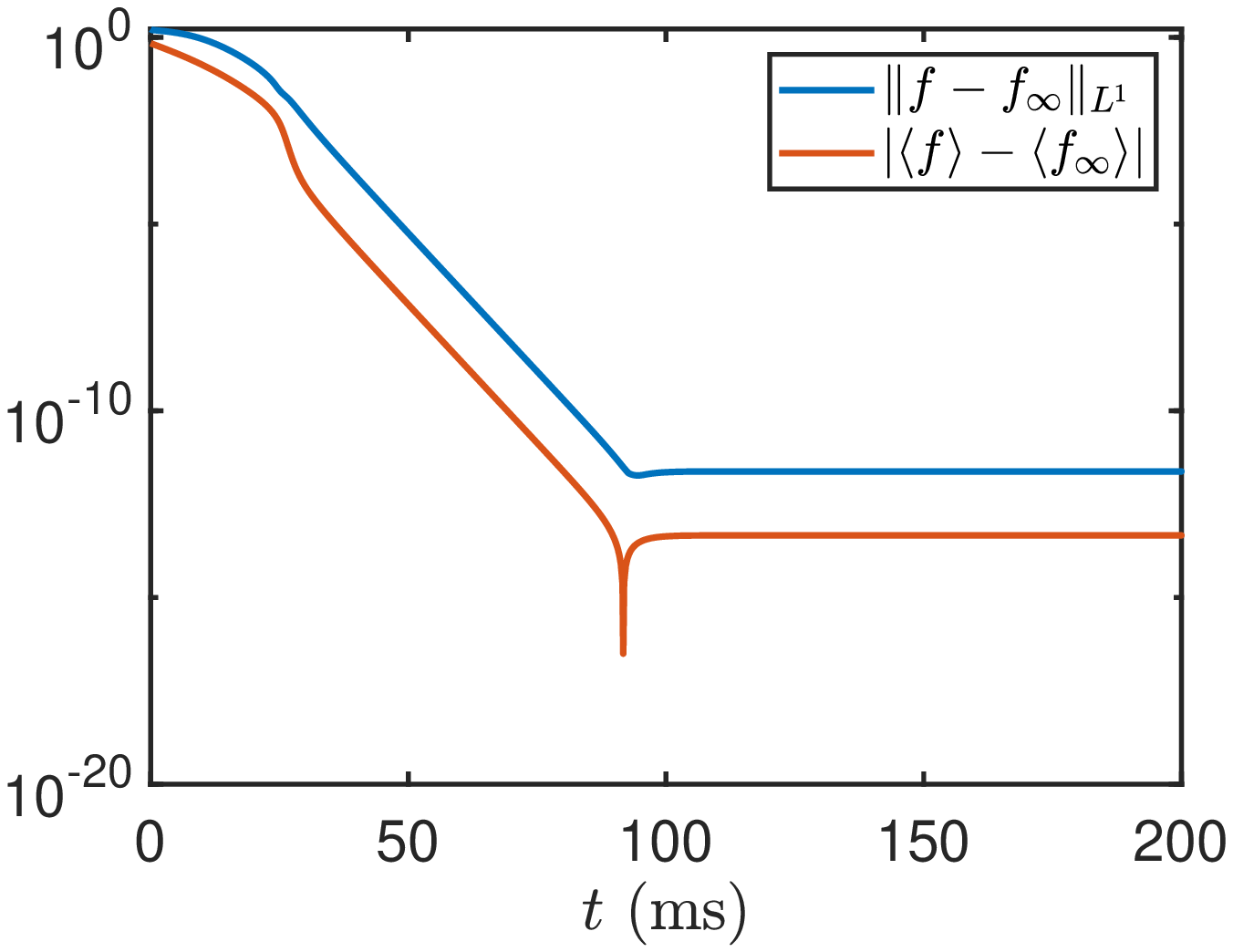}}
\caption{Stabilisation in time of the homogeneous problem. Left: numerical steady state $f$ (blue) versus the fixed point steady state $f_\infty$ (red) obtained from \eqref{eq:stat}--\eqref{eq:spathomsol} as functions of $s$. Right: the $L^1$-error and difference in mean between $f$ and $f_\infty$ plotted as functions of time $t$. Parameters: $\Phi_\eps$ with $\eps=0.01$. $W_0=-20.6711$. $\sigma=0.03$. The interval $[0,3]$ is split into $512$ grid points. Initial data: at random grid points $s_j, j=1,\dots,51$, $f_0(s_j)= 512/153$ and zero elsewhere. Average of the slopes over 100 runs: 0.78 (difference in mean) and 0.57 ($L^1$-difference).}
\label{fig:stability-hom}
\end{figure}

We have numerically analysed the stability of the stationary solutions obtained in the previous result among spatially homogeneous solutions of \eqref{eq:PDEhom}. In Fig.~\ref{fig:stability-hom}(a), we illustrate that the computed stationary state and the numerical solution to the evolution problem after time $t=150$ms are indistinguishable for the firing rate $\Phi_\eps$ with $\eps=0.01$. In Fig.~\ref{fig:stability-hom}(b), we observe the convergence in time towards the stationary state by computing the difference in $L^1$ and the difference in average between the stationary solution and the evolution problem. We conclude that the stationary state and the corresponding numerical solution to the evolution problem \eqref{eq:PDEhom} are identical to machine precision after $t=150$ms, and that the convergence in time is exponential with the rates computed by averaging over 100 runs.

Next, we focus on the linear stability of the spatially homogeneous solution as a solution of the nonlinear system \eqref{eq:PDE}--\eqref{eq:PDEbc}. For comparison, we first find the condition for linear spatial stability in the case of zero noise ($\sigma=0$) following the classical approach as in \cite{Murray03}. Let $\hat{W}(\mathbf{k})$ be the two-dimensional Fourier transform of $W$ restricted to $\Omega$, 
\begin{align*}
\hat{W}(\mathbf{k}) = \int_\Omega W(\bx)\exp(-i\mathbf{k}\cdot\bx) d\bx,  
\end{align*}
with $\mathbf{k} = \begin{pmatrix}
2\pi k_1 & 2\pi k_2
\end{pmatrix}^\top$, $k_1, k_2 \in \Z$.
As an example, if $W({\bx})$ is the characteristic of a ball with radius $r$ fully contained in $\Omega$, then
\begin{align*}
\hat{W}(\mathbf{k}) = 2 \pi \frac{J_1(r|\bf k|)}{|\bf k|},
\end{align*}
where $J_1(x)$ is a Bessel function.
Now let $\Phi_0' = \Phi'(W_0\langle \fstat \rangle+B)$ with $\fstat$ given by \eqref{eq:stat}, and define 
\begin{align}\label{eq:Fk}
F(\mathbf{k}) = \frac14\Phi_0'\hat{W}({\bf k})\sum_\beta \exp \big(-i \mathbf{k}\cdot \mathbf{r}^\beta\big). 
\end{align}
\begin{remark}\label{rem:shifts}
Given the assumptions on $W$ in \ref{assumptionW1} and \ref{assumptionW2}, the function $F(\bk )$ is real-valued when the shifts $\mathbf{r}^\beta$, $\beta=1,2,3,4$, satisfy the assumptions in \ref{assumptionrbeta}. With shifts of equal size $z$, one can check that
\begin{align*}
\sum_\beta \exp \big(-i \mathbf{k}\cdot \mathbf{r}^\beta \big) = 2 \cos (2 \pi k_1 z) + 2 \cos (2\pi k_2 z)
\end{align*}
with $\mathbf{k} = \begin{pmatrix}
2\pi k_1 & 2\pi k_2
\end{pmatrix}^\top$ using Euler's formula.
\end{remark}
The following lemma presents a linear stability condition for the system \eqref{eq:PDE} without noise. 

\begin{remark}
The proof of the mean field limit in \cite{CCS21} relies on $\sigma>0$. The mean field limit in the case $\sigma=0$ is easier to obtain and leads to a pure Vlasov equation. This is a very classical result in the smooth setting and is derived via estimates in transport distances, see \cite{HJ1, CCR11, J14, HJ2, Go16} and the references therein.
\end{remark}

\begin{lemma}\label{lem:nonoise}
Assume \ref{assumptionW1}--\ref{assumptionrbeta}. Let $F(\mathbf{k})$ be as in \eqref{eq:Fk}. Then the mean of the zero noise, i.e., $\sigma=0$, spatially homogeneous, stationary solution of \eqref{eq:PDE}--\eqref{eq:PDEbc} is linearly asymptotically stable if $F({\bf k}) < 1$.
\end{lemma}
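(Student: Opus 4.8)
The plan is to use that at $\sigma=0$ the Fokker--Planck system \eqref{eq:PDE} degenerates into a transport equation in the $s$ variable, for which the first moment obeys a closed equation. First I would multiply \eqref{eq:PDE} with $\sigma=0$ by $s$ and integrate over $s\in[0,\infty)$; using $\int_0^\infty f^\beta\,ds=1$ together with the standing decay assumption as $s\to\infty$, the boundary contributions at $s=0$ and at infinity drop out and one is left with the closed integro-differential system for the means $m^\beta(t,\bx):=\langle f^\beta\rangle(t,\bx)$,
\begin{align*}
\tau\,\partial_t m^\beta(t,\bx)=\Phi\Big(\tfrac14\sum_{\beta'}\int_\Omega W^{\beta'}(\bx-\by)\,m^{\beta'}(t,\by)\,d\by+B\Big)-m^\beta(t,\bx),\qquad\beta=1,\dots,4,
\end{align*}
with periodic boundary conditions in $\bx$ and $B$ constant. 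The spatially homogeneous stationary state of \eqref{eq:PDE}--\eqref{eq:PDEbc} has constant mean $m$ solving $m=\Phi(W_0m+B)$ (the $\sigma\to0$ form of \eqref{eq:spathomsol}), and since the lemma concerns the mean, which satisfies the closed system above, "linear asymptotic stability of the mean" is precisely linear asymptotic stability of this constant solution.

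Next I would linearise: writing $m^\beta=m+\eta^\beta$ and dropping higher-order terms gives $\tau\,\partial_t\eta^\beta=\Phi_0'\,\tfrac14\sum_{\beta'}\int_\Omega W^{\beta'}(\bx-\by)\,\eta^{\beta'}(t,\by)\,d\by-\eta^\beta$, with $\Phi_0'=\Phi'(W_0m+B)$. Expanding in Fourier series on the periodic box $\Omega$ with modes $\bk=(2\pi k_1,2\pi k_2)^\top$, $k_1,k_2\in\Z$, and using that convolution against $W^{\beta'}(\cdot)=W(\cdot-\br^{\beta'})$ becomes multiplication by $\hat W(\bk)\exp(-i\bk\cdot\br^{\beta'})$, the spatial modes decouple and, for each $\bk$, the four Fourier coefficients $(\hat\eta^1_\bk,\dots,\hat\eta^4_\bk)$ solve a $4\times4$ linear ODE with matrix
\begin{align*}
A_\bk=-\Id+\tfrac14\Phi_0'\,\hat W(\bk)\,\mathbf 1\,\mathbf w_\bk^\top,\qquad \mathbf 1=(1,1,1,1)^\top,\quad \mathbf w_\bk=\big(\exp(-i\bk\cdot\br^\beta)\big)_{\beta=1}^4.
\end{align*}

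Then I would read off the spectrum of $A_\bk$, a rank-one perturbation of $-\Id$. The matrix $\mathbf 1\,\mathbf w_\bk^\top$ has eigenvalue $\mathbf w_\bk^\top\mathbf 1=\sum_\beta\exp(-i\bk\cdot\br^\beta)$ with eigenvector $\mathbf 1$, and eigenvalue $0$ on the three-dimensional subspace $\{\mathbf v:\mathbf w_\bk^\top\mathbf v=0\}$; hence the eigenvalues of $A_\bk$ are $-1$ with multiplicity three and $-1+\tfrac14\Phi_0'\hat W(\bk)\sum_\beta\exp(-i\bk\cdot\br^\beta)=F(\bk)-1$ with $F$ as in \eqref{eq:Fk}, and by Remark~\ref{rem:shifts} together with the evenness of $W$ (so $\hat W(\bk)\in\R$), $F(\bk)$ is real. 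Therefore $e^{tA_\bk/\tau}\to0$ as $t\to\infty$ — exponentially, and also in the degenerate case $F(\bk)=0$, where $A_\bk$ is not diagonalisable but its only eigenvalue is still $-1<0$ — precisely when $F(\bk)-1<0$, i.e. $F(\bk)<1$. Since $W\in L^2(\Omega)$ forces $\hat W(\bk)\to0$ as $|\bk|\to\infty$, $\sup_\bk F(\bk)$ is attained, so the hypothesis $F(\bk)<1$ for every $\bk$ yields a uniform spectral gap and hence decay of all perturbations, i.e. linear asymptotic stability of the homogeneous mean.

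The moment computation and the rank-one eigenvalue identity are routine (this is the Murray-type linear-stability recipe alluded to in the text); the points I expect to need care are justifying the closure of the mean equation at $\sigma=0$ from the stated smoothness and decay, making precise that stability of the mean is the relevant notion here, and the degenerate case $F(\bk)=0$ — none of which is a genuine obstacle. The matching instability statement (some $F(\bk)>1$ producing a growing mode) would follow from the same eigenvalue computation, but is not needed for this lemma.
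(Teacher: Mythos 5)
Your proposal is correct and follows essentially the same route as the paper: take the first moment of \eqref{eq:PDE} at $\sigma=0$ to obtain the closed mean equation, linearise about the homogeneous state $s_\infty=\Phi(W_0 s_\infty+B)$, and Fourier-decouple in $\bx$ to find the eigenvalues $-1/\tau$ (multiplicity three) and $(F(\bk)-1)/\tau$ per mode. The paper reads these off from the characteristic polynomial of the $4\times4$ mode matrix rather than via your rank-one-perturbation phrasing, but the computation and conclusion are identical; your extra remarks on the degenerate case and the uniform spectral gap are harmless refinements.
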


\begin{proof}
By taking the mean of \eqref{eq:PDE}--\eqref{eq:PDEbc} with $\sigma = 0$, we find that the mean at position $\bx \in \Omega$, $s^\beta(t,\bx)=\langle f^\beta\rangle|_{\sigma=0}(t, \bx)$, evolves according to (dropping the $t$ dependence for ease of notation)
\begin{align} \label{eq:meanode}
\tau\frac{d}{dt} s^\beta(\bx)  = \Phi\left(\frac{1}{4}\sum_{\beta'}\int_\Omega W^{\beta'}(\bx-\by) s^{\beta'}(\by) d{\by} + B\right) -  s^\beta(\bx).
\end{align}
We linearise \eqref{eq:meanode} around the mean stationary, spatially homogeneous solution $s_\infty$, defined through $s_\infty = \Phi \big(W_0 s_\infty + B\big)$, and get 
\begin{align*}
\tau \frac{d}{dt} h^\beta (\bx) = \frac{\Phi_0'}{4}\sum_{\beta'}\int_\Omega W^{\beta'}(\bx-\by) h^{\beta'}(\by) d{\by} - h^\beta(\bx),
\end{align*}
where $h^\beta = s^\beta - s_\infty$ and $\Phi_0' = \Phi'(W_0s_\infty+B)$. Applying the ansatz $h^\beta(\bx,t) \propto \exp (\lambda t + i \bk \cdot \bx)$ to the equation, we find that each mode $\bk$ has the characteristic polynomial 
\begin{align*}
p_\bk(\lambda) = \left(\lambda + \frac1\tau \right)^3 \left(\frac{\Phi_0'\hat{W}(\mathbf{k})}{4\tau}\sum_\beta \exp \big(-i \mathbf{k}\cdot \mathbf{r}^\beta\big) - \frac1\tau -\lambda \right).
\end{align*}
We see that three of the eigenvalues are stable as long as $\tau >0$. The fourth eigenvalue is
\begin{align*}
\lambda = \frac{\Phi_0'\hat{W}(\mathbf{k})}{4\tau}\sum_\beta \exp \big(-i \mathbf{k}\cdot \mathbf{r}^\beta\big) - \frac1\tau = \frac1\tau \left( F(\mathbf{k})-1 \right),
\end{align*}
which determines whether the linear system is stable or not. The eigenvalue is negative if $F(\mathbf{k})<1$.
\end{proof}

We now turn to the case with noise, $\sigma>0$. Let $h^\beta = f^\beta - \fstat  $, where $\fstat$ is defined through \eqref{eq:stat}--\eqref{eq:spathomsol}. Notice that for the perturbations to be admissible, we need to ensure that $\int_0^\infty f^\beta ds = 1$, and consequently
$$
\int_0^\infty h^\beta ds = 0.
$$
In principle one needs $f^\beta \geq 0$ for it to be a probability density. However, we will prove below that linear stability holds without any assumption on the sign of $f^\beta$.

After linearising \eqref{eq:PDE} around the spatially homogeneous state $\fstat $, we get 
\begin{align}\label{eq:linearpde}
    \tau h_t^\beta =&\, -\partial_s \fstat  \frac{\Phi_0'}{4}\sum_{\beta'}\int_\Omega W^{\beta'}(\bx-\by)\langle h^{\beta'}\rangle d\by -\partial_s\big[(\Phi_0-s)h^\beta\big] + \sigma \partial_{ss}h^\beta, \\
    \Bigg(\fstat \frac{\Phi_0'}{4} &\sum_{\beta'}\int_\Omega W^{\beta'}(\bx-\by)\langle h^{\beta'}\rangle d\by +\Phi_0 h^\beta - \sigma \partial_{s}h^\beta\Bigg) \bigg|_{s=0} \!\!\!\!\!\!\!\!\!= 0,\nonumber
\end{align}
with $\beta =1, \dots, 4$.
By a change of variables $h^\beta = \fstat  v^\beta$, where $\fstat $ is the stationary state satisfying 
\begin{align*}
    -\partial_s\big(
[\Phi_0 -s] \fstat 
\big) + \sigma \partial_{ss}\fstat   = 0, \\
\big( \Phi_0 \fstat  - \sigma \partial_s \fstat  \big) \big|_{s=0} \!\! =0,
\end{align*}
we get 
\begin{align}\label{eq:pde-u1}
    \tau v_t^\beta = &\,  -\frac{(\Phi_0-s)}{\sigma}\frac{\Phi_0'}{4}\sum_{\beta'}\int_\Omega W^{\beta'}(\bx-\by)\langle\fstat  v^{\beta'}\rangle d\by +(\Phi_0-s)v_s^\beta + \sigma v_{ss}^\beta, \\
   \Bigg(\frac{\Phi_0'}{4}&\sum_{\beta'}\int_\Omega W^{\beta'}(\bx-\by)\langle\fstat  v^{\beta'}\rangle d\by -\sigma v_s^\beta \Bigg) \bigg|_{s=0}\!\!\!\!\!\!\!\!\! = 0,\nonumber
\end{align}
with $\beta =1, \dots, 4$. We now restrict the set of perturbations in $L^2(\Omega \times[0, \infty))$ to the ones of the form 
\begin{equation}\label{eq:fourierseries}
h^\beta(\bx,s,t) = \fstat(s)\sum_\bk  \exp (i \bk \cdot \bx) u^\beta_\bk(s,t),    
\end{equation}
where $u^\beta_\bk(s,t)$ is sufficiently smooth. We can then reduce \eqref{eq:pde-u1} to one Fourier mode. Dropping the subscript $\bk$ we set $v^\beta(\bx,s,t) = \exp (i \bk \cdot \bx)u^\beta(s,t)$, where $u^\beta$ may be complex-valued, and $U = \sum_\beta \exp(-i\bk \cdot {\bf r}^\beta ) u^\beta$, such that \eqref{eq:pde-u1} turns into
\begin{equation}\label{eq:pde-u}
\begin{aligned}
    \tau u_t^\beta = &\, -\frac{(\Phi_0-s)}{\sigma}\frac{\Phi_0'}{4}\hat{W}(\bk)\langle\fstat U\rangle +(\Phi_0-s)u_s^\beta + \sigma u_{ss}^\beta, \\
    &\quad \left(\frac{\Phi_0'}{4}\hat{W}(\bk)\langle\fstat U\rangle -\sigma u_s^\beta \right) \bigg|_{s=0} \!\!\!\!\!\!\!\!\!= 0,\quad 
\end{aligned}
\end{equation}
for $\beta =1, \dots, 4.$ An equation for the time evolution of $U$ can also derived, namely
\begin{equation}\label{eq:pde-U}
    \tau U_t= -\frac{(\Phi_0-s)}{\sigma}F(\bk)\langle\fstat U\rangle +(\Phi_0-s)U_s + \sigma U_{ss}, 
\end{equation}
where $F(\bk)$ is defined in \eqref{eq:Fk}.
Denote 
\begin{align}\label{eq:Minfty}
M_\infty= M_\infty(\sigma) = \int_0^\infty\big(s-\langle \fstat \rangle\big)^2\fstat ds,
\end{align}
where the dependence on $\sigma$ enters through $\fstat$ defined by \eqref{eq:stat}--\eqref{eq:spathomsol}.
We remark that we cannot obtain closed ODE equations for the moments of the distribution in the activity variable $s$, and thus a similar analysis as in \cite{TouboulPhysD,TouboulSIAM} is not possible here. Building on \eqref{eq:pde-u}, we can prove the following result. 

\begin{theorem}\label{thm:linearstability}
Assume \ref{assumptionW1}--\ref{assumptionrbeta}. Let $\Phi$ satisfy the assumptions in Proposition \ref{prop:exisuniquemean} and let $F(\mathbf{k})$ be as in \eqref{eq:Fk} and satisfy the condition in Remark \ref{rem:shifts}. Then the spatially homogeneous steady solution $f_\infty$ to \eqref{eq:PDE}--\eqref{eq:PDEbc} is linearly asymptotically stable in $L^2\big(\Omega \times [0,\infty)\big)$ for admissible perturbations of the form \eqref{eq:fourierseries} as long as
\begin{align}\label{eq:stabilitycondition} 
F(\mathbf{k}) < \frac{\sigma}{M_\infty} \qquad \mbox{for all \,} \bk .
\end{align} 
\end{theorem}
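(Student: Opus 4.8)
The plan is to locate, mode by mode, the spectrum of the generator of the reduced scalar equation \eqref{eq:pde-U} and to show it lies in $\{\operatorname{Re}\lambda<0\}$ precisely under \eqref{eq:stabilitycondition}. Inserting $U(s,t)=e^{\lambda t}\hat U(s)$ into \eqref{eq:pde-U} gives the eigenvalue problem
\begin{equation*}
\tau\lambda\,\hat U=L\hat U-\frac{F(\bk)}{\sigma}\,\langle\fstat\hat U\rangle\,(\Phi_0-s),\qquad L:=\sigma\partial_{ss}+(\Phi_0-s)\partial_s,
\end{equation*}
with the Neumann-type boundary condition $\sigma\,\hat U_s(0)=F(\bk)\,\langle\fstat\hat U\rangle$ (multiply the boundary condition in \eqref{eq:pde-u} by $e^{-i\bk\cdot\br^\beta}$ and sum over $\beta$) and the admissibility constraint $\int_0^\infty\fstat\hat U\,ds=0$, which one checks is preserved by \eqref{eq:pde-U}. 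First I would record the elementary identities coming from $\fstat\propto e^{-(s-\Phi_0)^2/2\sigma}$: $\sigma\partial_s\fstat=(\Phi_0-s)\fstat$, $\langle\fstat\rangle-\Phi_0=\sigma\fstat(0)$ (this is \eqref{eq:spathomsol}), $\int_0^\infty(s-\langle\fstat\rangle)\fstat\,ds=0$, $\int_0^\infty s(s-\langle\fstat\rangle)\fstat\,ds=M_\infty$, and above all $L(s)=\Phi_0-s$; a short computation then also yields $M_\infty=\sigma\bigl(1-\langle\fstat\rangle\fstat(0)\bigr)$.

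\textbf{Reduction to a characteristic equation.} On the weighted space $H:=L^2\bigl([0,\infty);\fstat\,ds\bigr)$ the operator $-L$ with homogeneous Neumann condition at $s=0$ is self-adjoint, nonnegative, with compact resolvent; its eigenvalues $0=\nu_0<\nu_1\leq\nu_2\leq\cdots$ satisfy $\nu_1\geq1$ (spectral gap of the reflected Ornstein--Uhlenbeck operator: Bakry--Émery applies since the confining potential has Hessian $1/\sigma$ against diffusivity $\sigma$, or one uses the gap $1$ on the whole line). Let $\{\psi_j\}_{j\geq0}$ be an orthonormal eigenbasis, $\psi_0\equiv1$. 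If $\langle\fstat\hat U\rangle=0$, the nonlocal term drops, $\hat U$ is a Neumann eigenfunction of $L$, the mass constraint excludes $\hat U\equiv\mathrm{const}$, so $\tau\lambda=-\nu_j\leq-1$. If $m:=\langle\fstat\hat U\rangle\neq0$, set $V:=\hat U-\tfrac{F(\bk)m}{\sigma}(s-\langle\fstat\rangle)$; the boundary condition gives $V_s(0)=0$, $V$ is $H$-orthogonal to $\psi_0$, and $L(s-\langle\fstat\rangle)=\Phi_0-s$ shows $(L-\tau\lambda)V=\tau\lambda\,\tfrac{F(\bk)m}{\sigma}(s-\langle\fstat\rangle)$. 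Expanding $s-\langle\fstat\rangle=\sum_{j\geq1}c_j\psi_j$ (so $\sum_{j\geq1}c_j^2=M_\infty$), inverting $L-\tau\lambda$ mode by mode, and imposing the self-consistency $\langle\fstat V\rangle=m\bigl(1-\tfrac{F(\bk)M_\infty}{\sigma}\bigr)$ yields, after simplification, the characteristic equation
\begin{equation*}
\frac{\sigma}{F(\bk)}=\sum_{j\geq1}\frac{\nu_j\,c_j^2}{\nu_j+\tau\lambda}=:h(\tau\lambda).
\end{equation*}

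\textbf{Sign of the roots.} Since $\operatorname{Im}h(\mu)$ and $\operatorname{Im}\mu$ have opposite signs, every root of $\sigma/F(\bk)=h(\tau\lambda)$ is real; on $(-\nu_1,\infty)$ the function $h$ is real, strictly decreasing, ranges from $+\infty$ to $0$ and satisfies $h(0)=\sum_j c_j^2=M_\infty$, while between consecutive poles it is strictly decreasing through all of $\R$; hence all roots exceeding $-\nu_1$ reduce to the single one in $(-\nu_1,\infty)$, which is negative exactly when $\sigma/F(\bk)>h(0)=M_\infty$, i.e.\ $F(\bk)<\sigma/M_\infty$, and the remaining roots lie in $(-\infty,-\nu_1)$. (For $F(\bk)\leq0$ nothing is needed: $\operatorname{Re}h>0$ wherever $\operatorname{Re}(\tau\lambda)\geq0$, whereas $\sigma/F(\bk)\leq0$.) Thus \eqref{eq:stabilitycondition} forces the spectrum of \eqref{eq:pde-U} into $\{\operatorname{Re}\lambda<0\}$ for every $\bk$; since $F(\bk)\to0$ as $|\bk|\to\infty$ by Riemann--Lebesgue one has $\sup_\bk F(\bk)<\sigma/M_\infty$ and the gap is uniform in $\bk$, so summing the decay of the individual modes \eqref{eq:fourierseries} gives exponential decay in $L^2\bigl(\Omega\times[0,\infty)\bigr)$.

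\textbf{Main obstacle and an alternative.} The difficulty I anticipate is analytic rather than algebraic: one must verify that the non-self-adjoint generator of \eqref{eq:pde-U}---a second-order elliptic operator in $s$ on the half-line with a coupled Neumann-type boundary condition plus a rank-one perturbation---generates an analytic semigroup with resolvent bounds uniform in $\bk$, so the spectral bound really controls the decay rate; one must also make the eigenfunction expansion rigorous in $H$ and confirm $\nu_1\geq1$. A softer but non-sharp alternative is the Lyapunov functional $E(t)=\tfrac12\int_0^\infty|U|^2\fstat\,ds$: the substitution $h=\fstat v$, the identity $\sigma\partial_s\fstat=(\Phi_0-s)\fstat$ and the boundary condition make the two $s=0$ boundary contributions cancel, leaving $\tau\dot E=-\sigma\int_0^\infty\fstat|U_s|^2\,ds+F(\bk)\operatorname{Re}\bigl[\langle\fstat U\rangle\,\overline{\int_0^\infty\fstat U_s\,ds}\bigr]$, and combining this with the Poincaré inequality for $\fstat$ gives exponential decay only under a condition strictly stronger than \eqref{eq:stabilitycondition}---the extremal mode being the linear profile $\hat U=s-\langle\fstat\rangle$, an exact stationary solution of the linearised equation at $F(\bk)=\sigma/M_\infty$---so the spectral route seems needed for the sharp threshold.
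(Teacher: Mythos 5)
Your strategy is sound and genuinely different from the paper's. The paper argues by an energy method: it derives the evolution of $\int_0^\infty \fstat|U|^2\,ds$ and of $|\langle\fstat U\rangle|^2$, combines them into the modified functional $\int_0^\infty\fstat|U|^2\,ds-\tfrac{F(\bk)}{\sigma}|\langle\fstat U\rangle|^2$, splits $U=V+c(t)(s-\langle\fstat\rangle)$ with $V\perp(s-\langle\fstat\rangle)$ in $L^2(\fstat\,ds)$, completes squares, and closes with the weighted Poincar\'e inequality and Gr\"onwall; it then needs a further energy estimate to pass from $U$ to each $u^\beta$ before invoking Parseval. You instead locate the spectrum of the reduced generator via a rank-one reduction to the characteristic equation $\sigma/F(\bk)=\sum_j\nu_jc_j^2/(\nu_j+\tau\lambda)$; your algebra checks out (the identities for $\fstat$, the boundary condition for $U$, $L(s)=\Phi_0-s$, the self-consistency relation, and $h(0)=M_\infty$ are all correct), and the two proofs pivot on the same structural fact, namely that $s-\langle\fstat\rangle$ is the extremal direction and becomes an exact neutral mode at $F(\bk)=\sigma/M_\infty$ — indeed the paper's Part I ends by noting that Cauchy--Schwarz is saturated precisely on $U=c(t)(s-\langle\fstat\rangle)$, and your $V:=\hat U-\tfrac{F(\bk)m}{\sigma}(s-\langle\fstat\rangle)$ is the eigenvalue-problem analogue of its Part II decomposition. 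What your route buys is sharpness (an unstable real eigenvalue when $F(\bk)>\sigma/M_\infty$) and a transparent derivation of the threshold; what it costs is exactly the analytic overhead you flag — generation of an analytic semigroup, spectral completeness for the non-self-adjoint perturbed operator, uniformity in $\bk$ — plus one step you gloss over: decay of $U$ does not immediately give decay of each $u^\beta$, and you would need to observe (as the paper does explicitly) that once $\langle\fstat U\rangle$ decays the $u^\beta$-equations decouple into damped OU dynamics. Finally, your closing claim that the Lyapunov route cannot reach the sharp condition is true only for the naive functional $\tfrac12\int|U|^2\fstat\,ds$; the paper's modified functional and orthogonal splitting do attain \eqref{eq:stabilitycondition} exactly, so the energy method suffices for the stated theorem, at the price of not proving instability above threshold.
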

\begin{remark}
Using the relations \eqref{eq:stat}--\eqref{eq:spathomsol}, one can check after some tedious computations that $M_\infty(\sigma)$
satisfies $M_\infty(0)=0$ and $M_\infty'(0)=1$ assuming $\Phi_0>0$. Thus, $\frac{\sigma}{M_\infty} \to 1$ as $\sigma \to 0$, yielding the condition in Lemma \ref{lem:nonoise} in the zero noise limit.
\end{remark}

\begin{proof}[Proof of Theorem \ref{thm:linearstability}] The proof is split into three parts. First, we obtain an upper bound for the time derivative of $\int_0^\infty \fstat  |U|^2 ds$ (Part I). Recall that $U = \sum_\beta \exp(-i\bk \cdot {\bf r}^\beta ) u^\beta$. To obtain a time decaying estimate from the bound, we need to separate the linear part of $U$ from the nonlinear (Part II). Finally, we establish the stability of $U$, and consequently $u^\beta$, which then yields the asymptotic stability of $f_\infty$ (Part III).

{\bf Part I}: 
Note that 
\begin{align}\label{eq:zerointegral}
\int_0^\infty \fstat  U ds = \sum_\beta \exp(-i\bk \cdot {\bf r}^\beta ) \int_0^\infty h^\beta ds = 0,
\end{align}
such that 
\begin{align}\label{eq:dsfU}
 \int_0^\infty \partial_s \fstat  U ds =  \frac{1}{\sigma} \int_0^\infty (\Phi_0-s) \fstat U ds =  - \frac{1}{\sigma}\langle \fstat U \rangle.
\end{align} 
We multiply \eqref{eq:pde-U} with $\bar{U}$ (the complex conjugate of $U$), and integrate over $[0,\infty)$ with respect to $\fstat$:
\begin{align}
\frac{\tau}{2} \frac{d}{dt} \int_0^\infty \fstat  |U|^2 ds &= - F(\bk) \int_0^\infty\frac{\Phi_0-s}{\sigma}\fstat \bar{U} ds\,  \langle\fstat U \rangle\notag \\
&\quad + \int_0^\infty(\Phi_0-s)f_\infty U_s \bar{U}  ds + \sigma \int_0^\infty \fstat U_{ss} \bar{U} ds.\label{eq:intu2evolution}
\end{align}
After applying \eqref{eq:dsfU} to the first term on the right-hand side and integrating the last term by parts, we find that $U$ satisfies (where $\Re(z)$ denotes the real part of the complex number $z$)
\begin{align*}
\frac{\tau}{2} \frac{d}{dt} \int_0^\infty \fstat  |U|^2 ds & =  \sigma F(\bk) \left|\int_0^\infty \partial_s\fstat  U ds \right|^2 - \sigma \Re (\fstat \bar{U} \partial_s U) |_{s=0} - \sigma \int_0^\infty \fstat  |U_s|^2 ds \\
 &=  \sigma (F(\bk)-1) \left|\int_0^\infty \partial_s\fstat  U ds \right|^2 - \sigma \Re (\fstat \bar{U} \partial_s U) |_{s=0} \\
& \quad + \sigma \left|\int_0^\infty \partial_s\fstat  U ds \right|^2- \sigma \int_0^\infty \fstat  |U_s|^2 ds.
\end{align*}
Using the boundary condition on the second term above and by again utilizing the equivalence \eqref{eq:dsfU}, we get 
\begin{align*}
\frac{\tau}{2} \frac{d}{dt} \int_0^\infty \fstat  |U|^2 ds &= \sigma (F(\bk)-1) \left|\int_{R_+} \partial_s\fstat  U ds \right|^2 + \sigma  F(\bk)\Re \left(\big(\fstat \bar{U}\big)|_{s=0} \int_0^\infty \partial_s\fstat  U ds \right)  \\
& \quad+ \sigma \left|\int_0^\infty \partial_s\fstat  U ds \right|^2 - \sigma \int_0^\infty \fstat  |U_s|^2 ds \\     
&=  \frac{1}{\sigma}(F(\bk)-1) \left|\langle \fstat  U\rangle \right|^2 - F(\bk)\Re \left(\big(\fstat \bar{U}\big) |_{s=0} \langle \fstat  U\rangle \right) \\
&\quad +\sigma \left|\int_0^\infty \partial_s\fstat  U ds \right|^2 - \sigma \int_0^\infty \fstat  |U_s|^2 ds.
\end{align*}
Performing an integration by parts on the second to last integral and then yet again using \eqref{eq:dsfU},
\begin{align*}
 \left|\int_0^\infty \partial_s\fstat  U ds \right|^2 & =   \left|\big(\fstat U\big)|_{s=0}+ \int_0^\infty \fstat  U_s ds \right|^2 \\
 & = \big(\fstat ^2 |U|^2\big)|_{s=0} + 2 \Re \left(\big(\fstat \bar{U}\big)|_{s=0} \int_0^\infty \fstat  U_s ds\right) + \left| \int_0^\infty \fstat  U_s ds \right|^2 \\
 & = - \big(\fstat ^2 |U|^2\big)|_{s=0} +  \frac{2}{\sigma} \Re \left(\big(\fstat \bar{U}\big)|_{s=0} \langle \fstat  U \rangle \right) + \left| \int_0^\infty \fstat  U_s ds \right|^2,
\end{align*}
we arrive at
\begin{align*}
\frac{\tau}{2} \frac{d}{dt} \int_0^\infty \fstat  |U|^2 ds &=  \frac{1}{\sigma}(F(\bk)-1) \left|\langle \fstat  U\rangle \right|^2 +(2- F(\bk)) \Re \left(\big(\fstat \bar{U}\big)|_{s=0} \langle \fstat  U\rangle \right) 
- \sigma \big(\fstat^2 |U|^2\big)|_{s=0} \\
&\quad +\sigma \left|\int_0^\infty \fstat  U_s ds \right|^2 - \sigma \int_0^\infty \fstat  |U_s|^2 ds.
\end{align*}
From \eqref{eq:pde-u} and the definition of $U$, it can be shown that $\langle \fstat U \rangle$ follows
\begin{align*}
\frac{\tau}{2} \frac{d}{dt} |\langle \fstat  U \rangle|^2 = \big(F(\bk)-1 \big) |\langle \fstat  U \rangle|^2 + \sigma \Re \left( \big(\fstat \bar{U}\big)|_{s=0}\langle \fstat  U \rangle \right),
\end{align*}
with $F(\bk)$ as in \eqref{eq:Fk}. We add the two equalities,
\begin{align*}
\frac{\alpha \tau}{2}& \frac{d}{dt} \int_0^\infty  \fstat  |U|^2 ds + \frac{\beta \tau}{2\sigma} \frac{d}{dt} |\langle \fstat  U \rangle|^2 \\
&= \frac{\alpha + \beta}{\sigma} \big(F(\bk)-1 \big) |\langle \fstat  U \rangle|^2 + \left( \alpha (2-F(\bk)) + \beta  \right)\Re \left( \big(\fstat \bar{U}\big)|_{s=0}\langle \fstat  U \rangle \right) 
- \alpha \sigma \big(\fstat^2 |U|^2\big)|_{s=0} \\
&\quad + \alpha  \sigma \left|\int_0^\infty \fstat  U_s ds \right|^2 - \alpha \sigma \int_0^\infty \fstat  |U_s|^2 ds.
\end{align*}
By setting $\alpha = 1$ and $\beta = - F(\bk)$, we get
\begin{align}\label{eq:equality}
\frac{\tau}{2} \frac{d}{dt}\left( \int_0^\infty \fstat  |U|^2 ds - \frac{F(\bk)}{\sigma} |\langle \fstat  U \rangle|^2 \right) = 
& - \left|\left(F(\bk)-1 \right) \frac{1}{\sqrt{\sigma}} \langle \fstat  U \rangle 
+\sqrt{\sigma}\big(\fstat U\big)\big|_{s=0} \right|^2 \\
&  +\sigma \left|\int_0^\infty \fstat  U_s ds \right|^2 - \sigma \int_0^\infty \fstat  |U_s|^2 ds. \nonumber
\end{align}
In the above derivation we have used that
\begin{align*}
2(1-F(\bk))\Re \left( \big(\fstat \bar{U}\big)|_{s=0}\langle \fstat  U \rangle \right) = (1-F(\bk)) \left(\big(\fstat \bar{U}\big)|_{s=0}\langle \fstat  U \rangle + \big(\fstat U\big)|_{s=0}\langle \fstat  \bar{U} \rangle\right).
\end{align*}
By applying the Cauchy--Schwarz inequality to $ \left|\int_0^\infty \fstat  U_s ds \right|^2$ with $u_1 = \fstat^\hf, u_2 = \fstat^\hf U_s$, we see that the right-hand side of \eqref{eq:equality} is non-positive. However, it is not straightforward to determine whether the right-hand side is strictly negative or if there is a Gr\"onwall type decay estimate to be obtained from this expression. In particular, the Cauchy--Schwarz inequality with the chosen functions $u_1 = \fstat^\hf, u_2 = \fstat^\hf U_s$ is an equality, 
$$
\left|\int_0^\infty \fstat  U_s ds \right|^2 = \Big(\int_0^\infty \fstat ds \Big) \Big( \int_0^\infty \fstat  |U_s|^2 ds\Big) = \int_0^\infty \fstat  |U_s|^2 ds,
$$ 
whenever $u_2 = c(t) u_1$ for any function $c(t)$, which here means that $U = c(t) (s- \langle \fstat \rangle)$ when we make sure that the requirement $\int_0^\infty U \fstat ds = 0$ holds.

{\bf Part II}: To separate the linear part from the nonlinear part, we split $U$ into $U = V + c(t)(s-\langle \fstat \rangle)$.
We have that $\int_0^\infty V \fstat ds = 0$. Next, we choose $c(t)$ such that $V$ and $s-\langle \fstat \rangle$ are orthogonal with respect to the measure $\fstat ds$, i.e.,  $\int_0^\infty V (s-\langle \fstat \rangle) \fstat ds = 0$.  For this we choose
\begin{equation}
    c(t)=\langle\fstat U\rangle \left(\int_0^\infty s^2\fstat(s) ds-\langle\fstat\rangle^2  \right)^{-1}=\frac{\langle\fstat U\rangle}{M_\infty},
\end{equation}
where $M_\infty$ is defined in \eqref{eq:Minfty}. We can summarise this as
\begin{align*}
\int_0^\infty \fstat  |U|^2 ds & = \int_0^\infty \fstat  |V|^2 ds + M_\infty |c(t)|^2, \\
\langle \fstat U \rangle & = M_\infty c(t), \\
(\fstat U) |_{s=0} & = \fstat(0)(V(0) -c(t)\langle \fstat \rangle),
\end{align*}
such that \eqref{eq:equality} becomes
\begin{gather} \label{eq:ortheq}
\begin{aligned}
\frac{\tau}{2} \frac{d}{dt} & \left( \int_0^\infty \fstat  |V|^2 ds + \left( 1 - M_\infty \frac{F(\bk)}{\sigma} \right) M_\infty |c(t)|^2 \right) \\
&=  - \left|\left(F(\bk)-1 \right) \frac{1}{\sqrt{\sigma}} M_\infty c(t) +\sqrt{\sigma}\fstat (V - c(t)\langle \fstat \rangle )\big|_{s=0} \right|^2  \\
&\quad+ \sigma \left|\int_0^\infty \fstat  V_s ds \right|^2 -  \sigma \int_0^\infty \fstat  |V_s|^2 ds. 
\end{aligned}
\end{gather}

From the orthogonality $\int_0^\infty V (s-\langle \fstat \rangle) \fstat ds = 0$, we get
\begin{align*}
(\fstat V) \big|_{s=0} & = -\int_0^\infty \partial_s (\fstat V) ds = -\int_0^\infty \partial_s \fstat V ds -\int_0^\infty \fstat V_s ds \\
& = \int_0^\infty \frac{s-\Phi_0}{\sigma} \fstat V ds -\int_0^\infty \fstat V_s ds = -\int_0^\infty \fstat V_s ds.
\end{align*}

Inserting this into the first square in \eqref{eq:ortheq} and then expanding the square, \eqref{eq:ortheq} turns into
\begin{align*}
\frac{\tau}{2} \frac{d}{dt} & \left( \int_0^\infty \fstat  |V|^2 ds + \left( 1 - M_\infty \frac{F(\bk)}{\sigma} \right) M_\infty |c(t)|^2 \right)\\
&\qquad\qquad=  - R^2|c(t)|^2 + 2 \sqrt{\sigma} R\, \Re \left( c(t) \int_0^\infty V_s \fstat ds \right) 
- \sigma \int_0^\infty \fstat  |V_s|^2 ds \\
&\qquad\qquad=  \int_0^\infty \left(- R^2|c(t)|^2 +2 \sqrt{\sigma} R \,\Re \left( c(t) V_s \right) -\sigma |V_s|^2 \right)
\fstat ds \\
&\qquad\qquad=  - \int_0^\infty\left| Rc(t) - \sqrt{\sigma} V_s \right|^2 \fstat ds,
\end{align*}
with $R = (F(\bk)-1)\frac{M_\infty}{\sqrt{\sigma}}-\sqrt{\sigma}\fstat(0)\langle \fstat \rangle$. 
We now apply the Poincar\'e inequality with respect to the measure $\fstat (s) ds$ \cite{Mu72,RBI17}, given by 
\begin{align*}
C \int_0^\infty \left|G- \int_0^\infty G \fstat ds \right|^2 \fstat  ds \leq \int_0^\infty |G_s |^2 \fstat ds.
\end{align*}
Let $G_s = Rc(t)-\sqrt{\sigma}V_s$. Then $G = Rc(t)(s-\langle \fstat\rangle)-\sqrt{\sigma}V$ and $\int_0^\infty G \fstat ds = 0$, such that 
\begin{gather}\label{eq:finalineq}
\begin{aligned}
\frac{\tau}{2} \frac{d}{dt} \left( \int_0^\infty \fstat  |V|^2 ds + \left( 1 - M_\infty \frac{F(\bk)}{\sigma} \right) M_\infty |c(t)|^2 \right) & \leq  - C \int_0^\infty\left| Rc(t)(s-\langle \fstat\rangle)-\sqrt{\sigma}V \right|^2 \fstat ds \\
&=  - C R^2 M_\infty |c(t)|^2- C\sigma \int_0^\infty|V |^2 \fstat ds.
\end{aligned}
\end{gather}
In the above calculation, we have used the orthogonality of $V$ and $c(t)(s-\langle \fstat \rangle)$ with respect to $\fstat ds$.

{\bf Part III}: Defining 
\begin{align*}
Q(t) := \int_0^\infty \fstat  |V|^2 ds, \quad
D(t) := M_\infty H |c(t)|^2, \quad H:=1 - M_\infty\frac{F(\bk)}{\sigma},
\end{align*}
the inequality \eqref{eq:finalineq} reads
\begin{align*}
\frac{d}{dt} (Q(t) + D(t)) \leq - \frac{2}{\tau} C \frac{R^2}{H} D(t) - \frac{2}{\tau} C \sigma Q(t) \leq
- \frac{2}{\tau} C \min \left\{\frac{R^2}{H}, \sigma \right\} \left( Q(t) + D(t)\right),
\end{align*}
due to \eqref{eq:stabilitycondition}. Note that also due to \eqref{eq:stabilitycondition}, $D(t) \geq 0$. It can be checked after some tedious computations using the explicit expression of $\fstat$ in \eqref{eq:stat}--\eqref{eq:spathomsol} that $R<0$ when $\Phi\geq 0$. This leads to the exponential decay by an application of Gr\"onwall's inequality 
\begin{align}\label{eq:expdecay}
Q(t) + D(t) \leq \left(Q(0) + D(0)\right) \exp \left(- \frac{2}{\tau} C \min \left\{\frac{R^2}{H}, \sigma \right\} t\right).
\end{align}
Thus, we can conclude that $U$ is asymptotically stable. What remains to show is that the same holds for $u^\beta$. We multiply \eqref{eq:pde-u} with $\bar{u}^\beta$ and integrate over $[0,\infty)$,
\begin{align*}
\frac{\tau}{2} \frac{d}{dt} \int_0^\infty \fstat  |u^\beta|^2 ds &= - \frac{\Phi_0'}{4}\hat{W}(\bk) \int_0^\infty\frac{\Phi_0-s}{\sigma}\fstat \bar{u}^\beta ds\,  \langle\fstat U \rangle\notag \\
&\quad + \int_0^\infty(\Phi_0-s)f_\infty u^\beta_s \bar{u}^\beta  ds + \sigma \int_0^\infty \fstat u^\beta_{ss} \bar{u}^\beta ds.
\end{align*}
As done for $U$, we integrate the last term by parts and use the boundary condition such that
\begin{align*}
\frac{\tau}{2} \frac{d}{dt} \int_0^\infty \fstat  |u^\beta|^2 ds &= - \frac{\Phi_0'}{4}\hat{W}(\bk)\left( \int_0^\infty\partial_s\fstat \bar{u}^\beta ds + \bar{u}^\beta\fstat|_{s=0}\right)\,  \langle\fstat U \rangle
  - \sigma \int_0^\infty \fstat |u^\beta_{s}|^2 ds \\
  &= \frac{\Phi_0'}{4}\hat{W}(\bk)\left( \int_0^\infty\fstat \bar{u}_s^\beta ds\right)\,  \langle\fstat U \rangle
  - \sigma \int_0^\infty \fstat |u^\beta_{s}|^2 ds \\
  & \leq \frac{|\Phi_0'|}{4}|\hat{W}(\bk)|\left(\frac{1}{2\alpha}|\langle\fstat U \rangle|^2+\frac{\alpha}{2} \left|\int_0^\infty\fstat \bar{u}_s^\beta ds\right|^2\right)
  - \sigma \int_0^\infty \fstat |u^\beta_{s}|^2 ds,
\end{align*}
where $\alpha$ is to be determined. We apply the Cauchy--Schwarz inequality to the middle term and rearrange,
\begin{align*}
\frac{d}{dt} \int_0^\infty \fstat  |u^\beta|^2 ds & \leq  \underbrace{\frac{|\Phi_0'|}{4\tau \alpha}|\hat{W}(\bk)|}_{=:\,C_1 }|\langle\fstat U \rangle|^2
-\underbrace{\left(-\alpha \frac{|\Phi_0'|}{4\tau}|\hat{W}(\bk)|
  + \frac{2}{\tau} \sigma \right)}_{=:C_2} \int_0^\infty \fstat |u^\beta_{s}|^2 ds.
\end{align*}
We now choose $\alpha > 0$ such that $C_2 > 0$, and then apply Poincar\'e's inequality to the integral, 
\begin{align*}
\frac{d}{dt} \int_0^\infty \fstat  |u^\beta|^2 ds & \leq  C_1 |\langle\fstat U \rangle|^2
-C_2 \int_0^\infty \fstat |u^\beta|^2 ds \\
& \leq \tilde{C}_1 \exp{(-C_3 t)} -\tilde{C_2} \int_0^\infty \fstat |u^\beta|^2 ds,
\end{align*}
where the exponential decay \eqref{eq:expdecay} is applied in the last step. This leads to 
\begin{align*}
\int_0^\infty \fstat  |u^\beta|^2 (t) ds \leq \frac{\tilde{C}_1}{\tilde{C_2}-C_3} \left( \exp (-C_3 t) - \exp (-\tilde{C_2} t) \right) +  \int_0^\infty \fstat  |u^\beta|^2 (0) ds \exp (-\tilde{C_2} t).  
\end{align*}
One can avoid $\tilde{C_2} = C_3$ by choosing $\alpha$ appropriately. 

The asymptotic stability of $f_\infty$ in $L^2\big(\Omega \times [0,\infty)\big)$ for the set of perturbations given by \eqref{eq:fourierseries} now follows by an application of Parseval's identity with respect to $\bx$ to 
$$
\int_0^\infty \int_\Omega|h^\beta|^2 d\bx ds\leq \tfrac1Z \int_0^\infty \int_\Omega|h^\beta|^2 d\bx \frac1{f_\infty(s)} ds,
$$ 
the identity \eqref{eq:fourierseries}, and the estimate above. Notice that $f_\infty(s)\leq \tfrac1Z$ from \eqref{eq:stat2}.
\end{proof} 

\begin{remark}
In principle, the linear stability analysis is valid only for smooth $\Phi$. However, the stability condition \eqref{eq:stabilitycondition} of the linearised problem does only depend on $\Phi'$ such that the result holds for the linearised system with $\Phi (x) = (x)^+$. Notice that condition \eqref{eq:stabilitycondition} is continuous with respect to the regularisation parameter $\varepsilon$ in Remark \ref{rem:regular} for which the linearisation is valid. 

We also remark that the value of $R$ in the proof above remains strictly negative as long as $\varepsilon$ is small enough despite the fact that $\Phi_\varepsilon$ may give negative values. 
\end{remark}

\begin{figure}[ht]
\centering
\subfigure
{
\includegraphics[trim={0cm 0.cm 0cm 0cm},clip, width=0.35\textwidth]{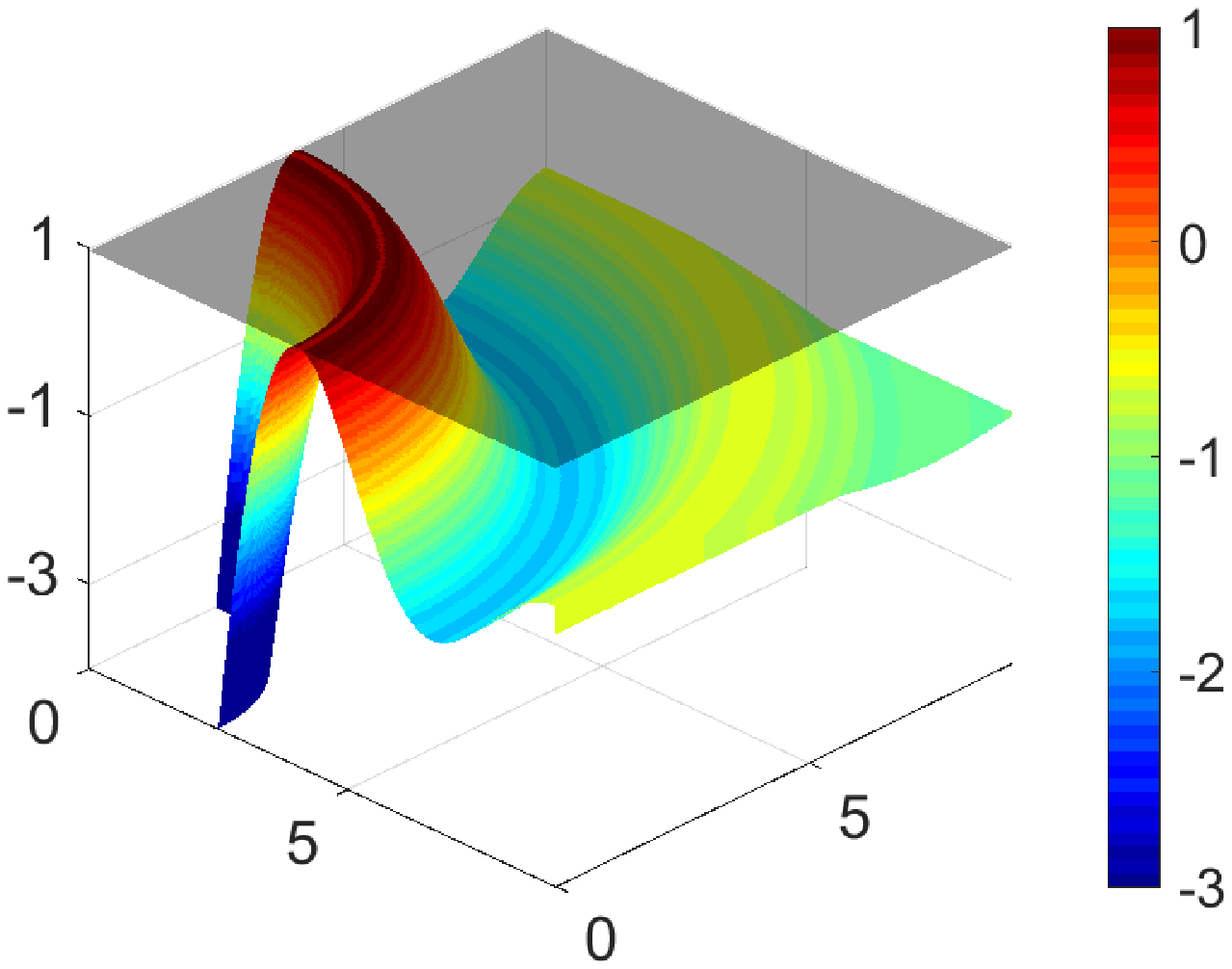}}
\subfigure
{
\includegraphics[trim={0cm 0.cm 0cm 0cm},clip,width=0.35\textwidth]{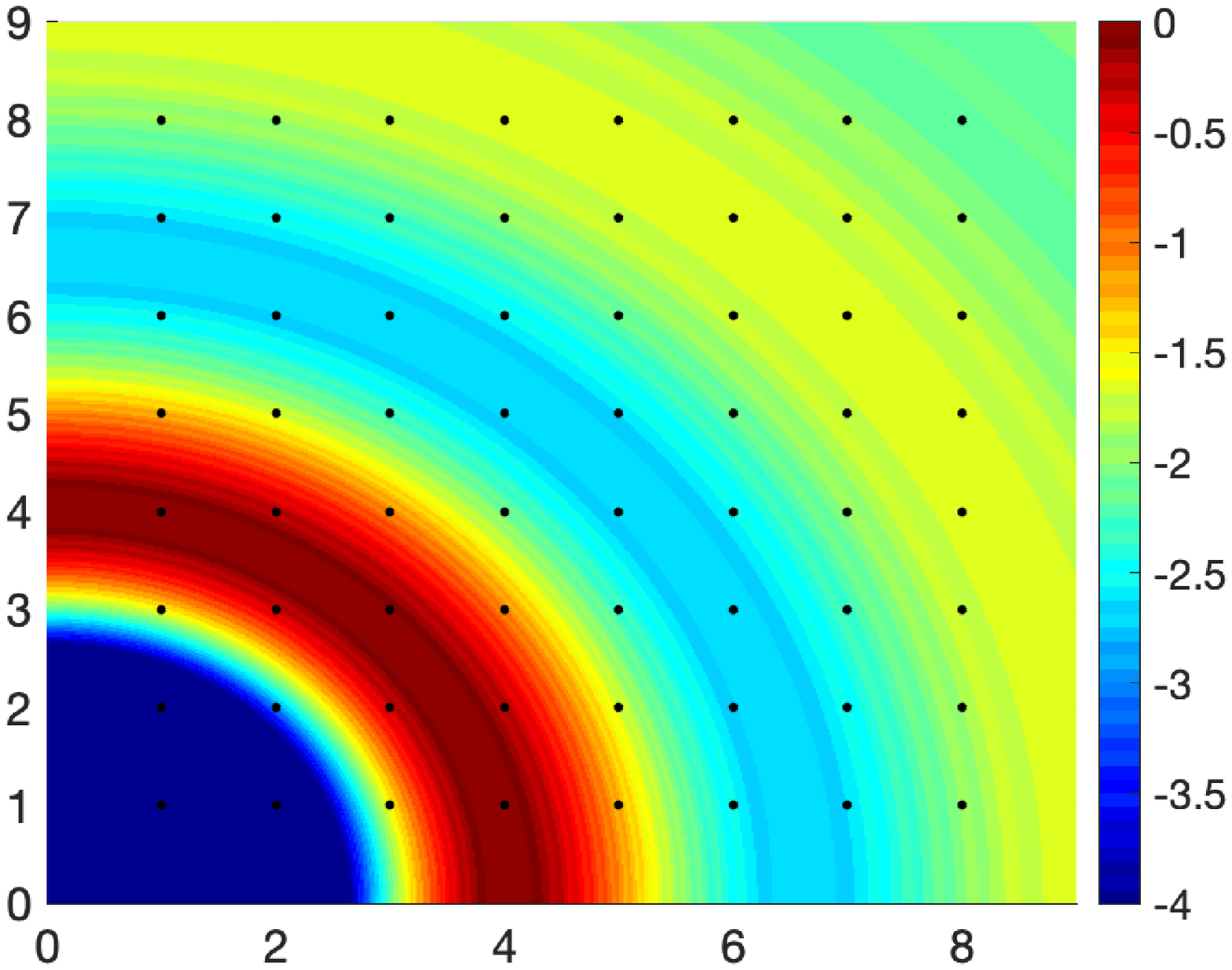}}
\caption{Plots of the linear stability condition on $\frac{M_\infty}{\sigma}F(\bk)$ for $\Phi(x) = \Phi_\eps(x), \eps =0.01$, with $W(|\bx|)=-0.005\cdot 128^2\left(1+\tanh(10-50|\bx|)\right)$. Left: $\frac{M_\infty}{\sigma}F(\bk)$ at the minimal value of the noise $\sigma$ for linear stability \eqref{eq:stabilitycondition}. Right: Associated contour plot highlighting the Fourier modes $\bk=2\pi\begin{pmatrix}
 k_1 & k_2
\end{pmatrix}^\top$ with black dots at $(k_1,k_2)$.}
\label{fig:Fk}
\end{figure}

\begin{figure*}
\centering
\subfigure
{
\includegraphics[trim={0cm 0.cm 0cm 0cm},clip, width=0.25\textwidth]{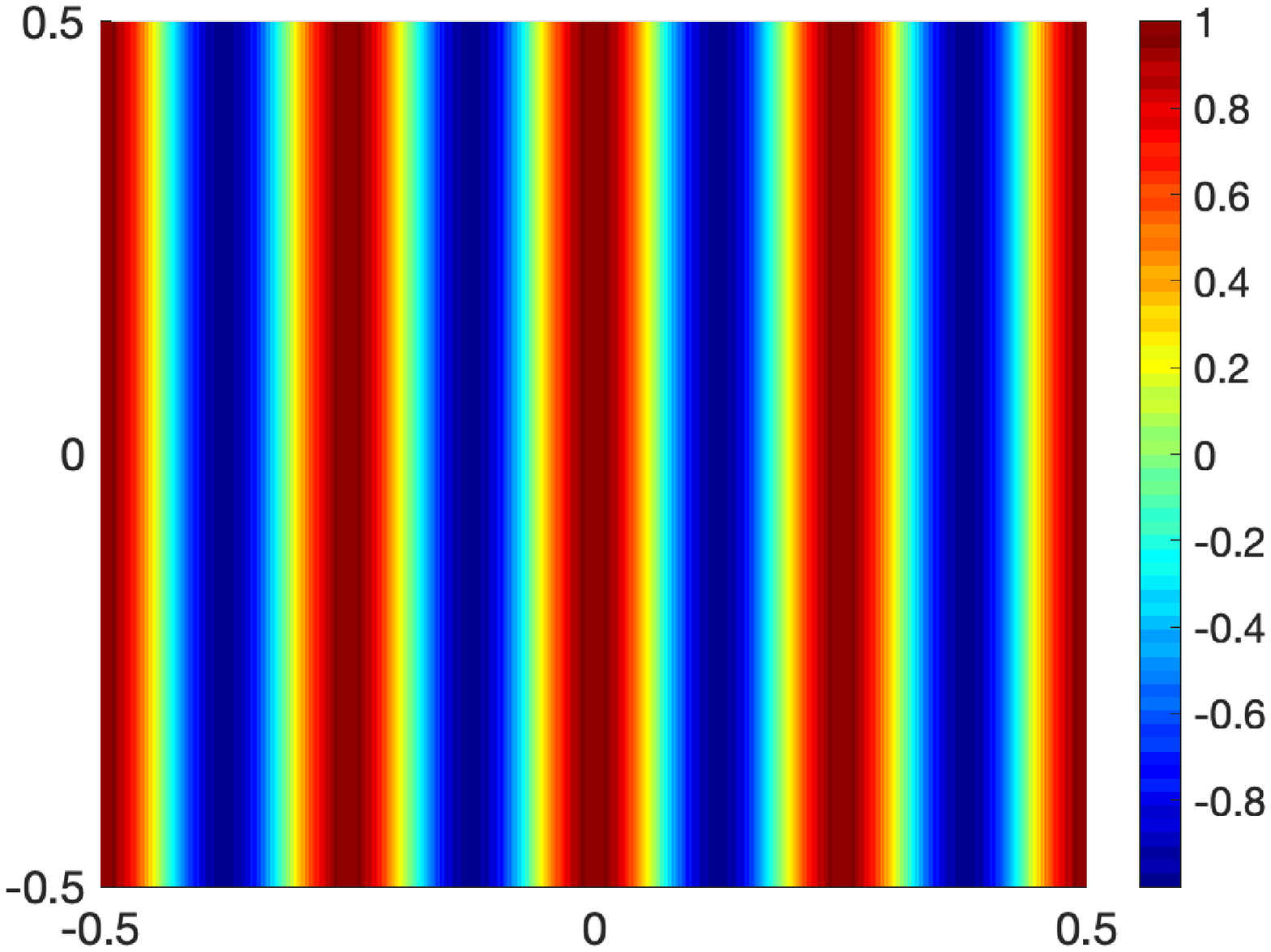}}
\subfigure
{
\includegraphics[trim={0cm 0.cm 0cm 0cm},clip,width=0.25\textwidth]{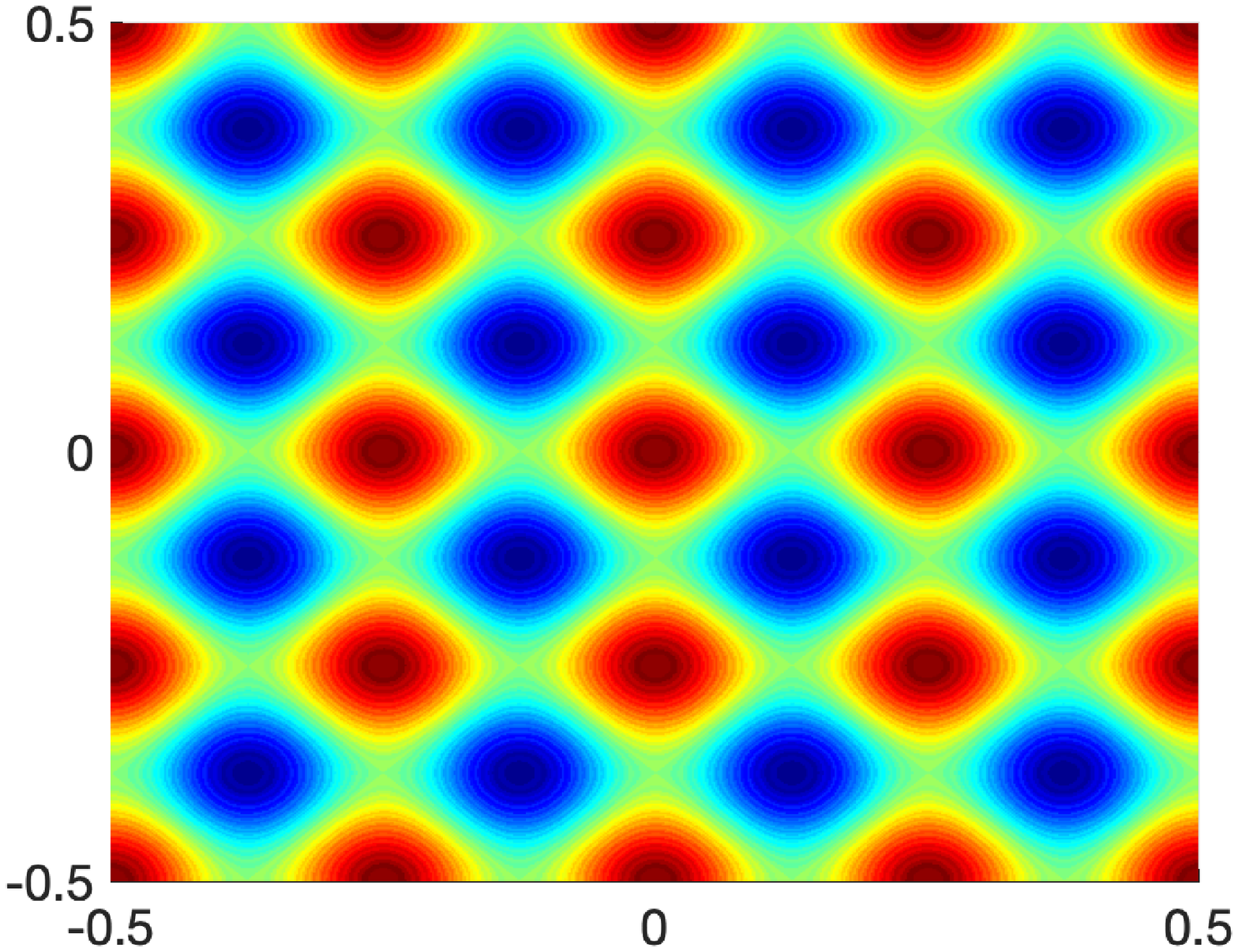}}
\subfigure
{
\includegraphics[trim={0cm 0.cm 0cm 0cm},clip,width=0.25\textwidth]{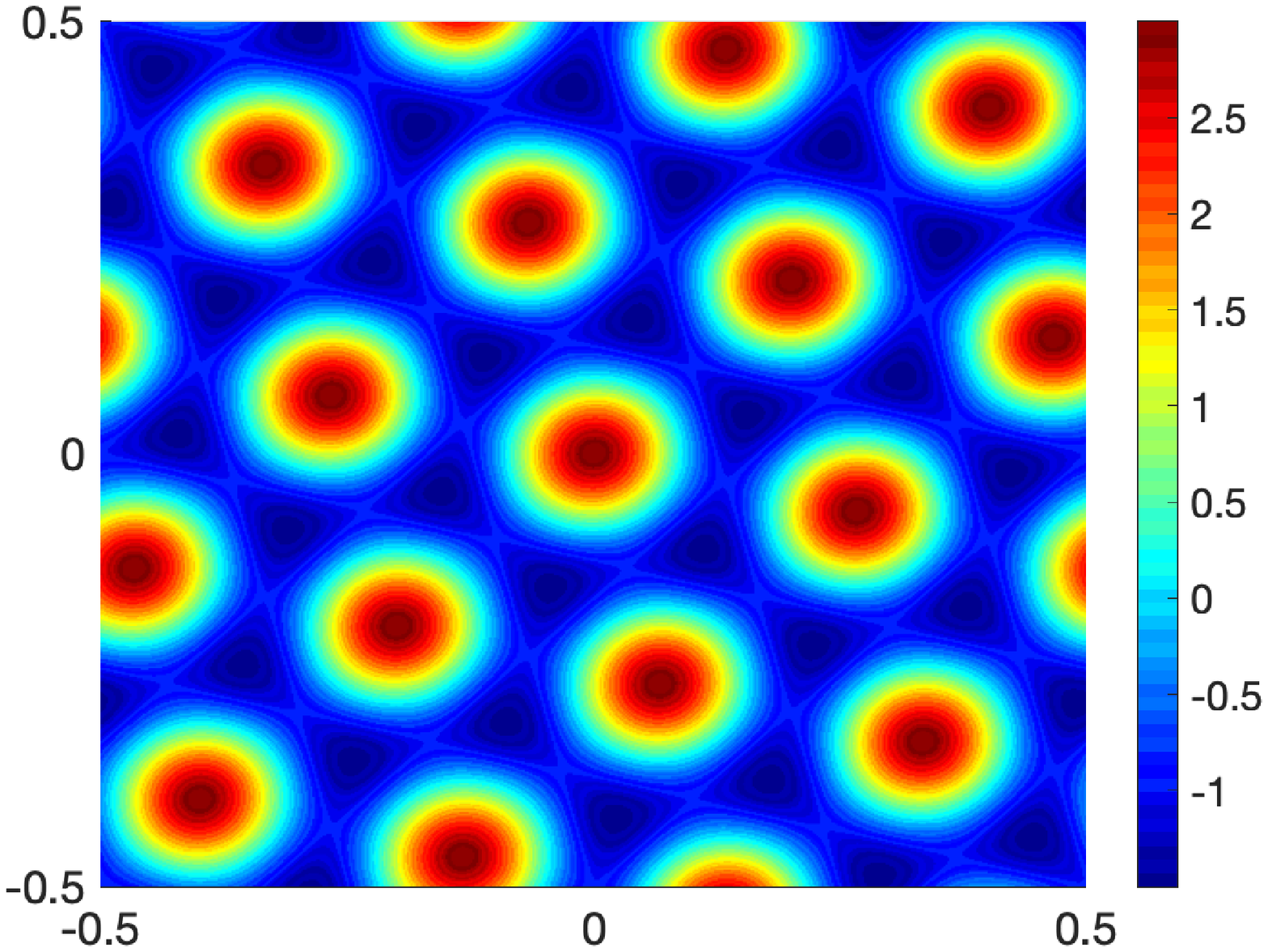}}
\caption{Linear combinations of $\cos (\bk\cdot\bx )$ are plotted against $x$ horizontally and $y$ vertically. The Fourier modes $\bk$ are chosen to be the points with the largest values of $F(\bk )$ in Fig.~\ref{fig:Fk}(b) with $W(|\bx|)=-0.005\cdot 128^2\left(1+\tanh(10-50|\bx|)\right)$. From left to right: ${\bf k} =2 \pi (4,0)$, $ {\bf k} = 2\pi(4,0), 2\pi(0,4)$ and $ {\bf k} =2\pi(4,1),2\pi(1,4),2\pi(3,-3)$.}
\label{fig:k-patterns}
\end{figure*}

\section{Bifurcation diagrams and phase transitions}
With our linear stability analysis at hand, we will now investigate how the stationary patterns of the nonlinear PDE system \eqref{eq:PDE}--\eqref{eq:PDEbc} change as we vary the noise parameter $\sigma$. We do this by numerically computing bifurcation branches from the spatially homogeneous solution $f_\infty$ for various choices of the modulation function $\Phi$. The numerical procedure is described in more detail in Section \ref{sec:bif}. 

\subsection{Instability of the linearised system}
First, to connect the linear stability analysis in Section \ref{sec:stability} with the patterns we observe for the full system \eqref{eq:PDE}--\eqref{eq:PDEbc}, we start by investigating the dominant Fourier modes $\bk$ of the perturbations $h^\beta(\bx,s,t) = \fstat(s)\sum_\bk  \exp (i \bk \cdot \bx) u^\beta_\bk(s,t)$, where $u^\beta_\bk$ satisfies \eqref{eq:pde-u}. The stability condition  \eqref{eq:stabilitycondition} of Theorem \ref{thm:linearstability} is visualised by plotting the function $\frac{M_\infty}{\sigma}F(\bk )$ for the case $\Phi(x) = \Phi_\eps(x), \eps =0.01$ (cf.~\eqref{eq:modul}), with $W(|\bx|)=-0.005\cdot 128^2\left(1+\tanh(10-50|\bx|)\right)$, against the modes $\mathbf{k}$ in Fig.~\ref{fig:Fk}. The figure illustrates that the maximum points over the lattice $\mathbf{k} = \begin{pmatrix} 2\pi k_1 & 2\pi k_2 \end{pmatrix}^\top$, $k_1,k_2\in \Z$ for this particular $W$ are among $\{(4,0),(4,1),(3,3),(1,4),(0,4)\}$ and their reflections by symmetries with respect to the origin, $k_1=0$, and $k_2=0$. Note that the modulation function $\Phi$ of the firing rate has no effect on the maximum points of $F(\bk)$ since it enters through as an amplification factor in \eqref{eq:Fk}. As a consequence, we may expect that the patterns leading the instability of the homogeneous in space stationary state $\fstat$ are driven by a  combination of these Fourier modes. Examples of possible patterns generated as a sum of cosines depending on the dominant modes, i.e., the maximum points of $F(\bk)$ over the lattice $\mathbf{k} = \begin{pmatrix} 2\pi k_1 & 2\pi k_2 \end{pmatrix}^\top$, $k_1,k_2\in \Z$, are depicted in Fig.~\ref{fig:k-patterns}. Notice that the rightmost plot displays a hexagonal pattern similar to the ones generated by the nonlinear PDE system \eqref{eq:PDE}--\eqref{eq:PDEbc} in the top and middle row of Fig.~\ref{fig:single}. See a similar strategy to this for a related problem in \cite[Ch.~12]{Murray03}.

\subsection{Bifurcations and phase transitions of the nonlinear PDE system}\label{sec:bif}
We now continue with our examination of the stability of the spatial patterns, generated by the full system \eqref{eq:PDE}--\eqref{eq:PDEbc}, with respect to $\sigma$. In Fig.~\ref{fig:bif-zoom}, we numerically compute bifurcation branches from the spatially homogeneous solution $\fstat$ for different modulation functions $\Phi$ with $W(|\bx|)=-0.005\cdot 128^2\left(1+\tanh(10-50|\bx|)\right)$ for the nonlinear problem \eqref{eq:PDE}--\eqref{eq:PDEbc}. This is done by using a continuation based method on $\sigma$ over an accurate numerical solver for the evolution in time of Fokker--Planck like equations developed in \cite{CCH}; further details are given in Appendix \ref{app:numerics}. The continuation method starts either at the largest or the smallest noise value $\sigma$ of the interval under consideration and it solves for the evolution in time of \eqref{eq:PDE}--\eqref{eq:PDEbc} up to stabilisation to a steady value. This allows for recursive computation of the stationary states for smaller or larger values of the noise by taking as initial data the already computed steady state. With this procedure we ensure, up to numerical accuracy, that we compute the stable stationary states, either by sweeping the noise values from left-to-right (l2r) or from right-to-left (r2l).

Each subplot in Fig.~\ref{fig:bif-zoom} shows the maximum and minimum over space $\bx$ of the average activity rate $\langle f\rangle(\bx)=\sum_\beta \langle f^\beta \rangle (\bx) $ of the computed steady states for each noise value $\sigma$. We show both the spatial maximum and minimum of $\langle f\rangle(\bx)$ to illustrate the fact that the computed stationary states are not spatially homogeneous, in other words, that they lead to spatial patterns. We also plot the spatially homogeneous branch numerically solving the implicit expression \eqref{eq:spathomsol} as reference. The red dots indicate the stability threshold in $\sigma$ for the condition $F(\bk)< 1$, as in Lemma \ref{lem:nonoise}, to hold.

In Fig.~\ref{fig:bif-zoom}(a), we observe the bifurcation branches for the sigmoid function $\Phi (x) = 1/(1+\exp(-15x))$. All of them show a sharp discontinuity at different noise values. We restrict the discussion to the lines representing the spatial maximum. We first focus on the full line (l2r) and the dashed line (r2l) that connect two bifurcation branches at different noise values corresponding to a hexagonal-like pattern similar to Fig.~\ref{fig:stat-patterns}(a). This clearly indicates that there is a discontinuous phase transition near the noise value indicated by the arrow. The fact that the l2r and r2l curves do not coincide further indicates that there is a hysteresis phenomenon. This conclusion is supported by the fact that the blue dot, the minimum noise value for linear stability \eqref{eq:stabilitycondition}, is to the left of both branches. This allows the possibility of  branches of dynamically unstable steady states bending backwards in noise at the phase transition point. Unstable branches are not computable with our numerical approach. Finally, we find a second bifurcation branch given by the dotted line (l2r-s) in Fig.~\ref{fig:bif-zoom}(c) corresponding to a stripe-like pattern similar to Fig.~\ref{fig:stat-patterns}(b). This branch was found by imposing a particular symmetry on the initial data, i.e., enforcing a horizontal band with activity level one.

\begin{figure*}[ht]
\centering
\subfigure
{
\includegraphics[trim={0cm 0.3cm 0.cm 0.6cm},clip, width=0.45\textwidth]{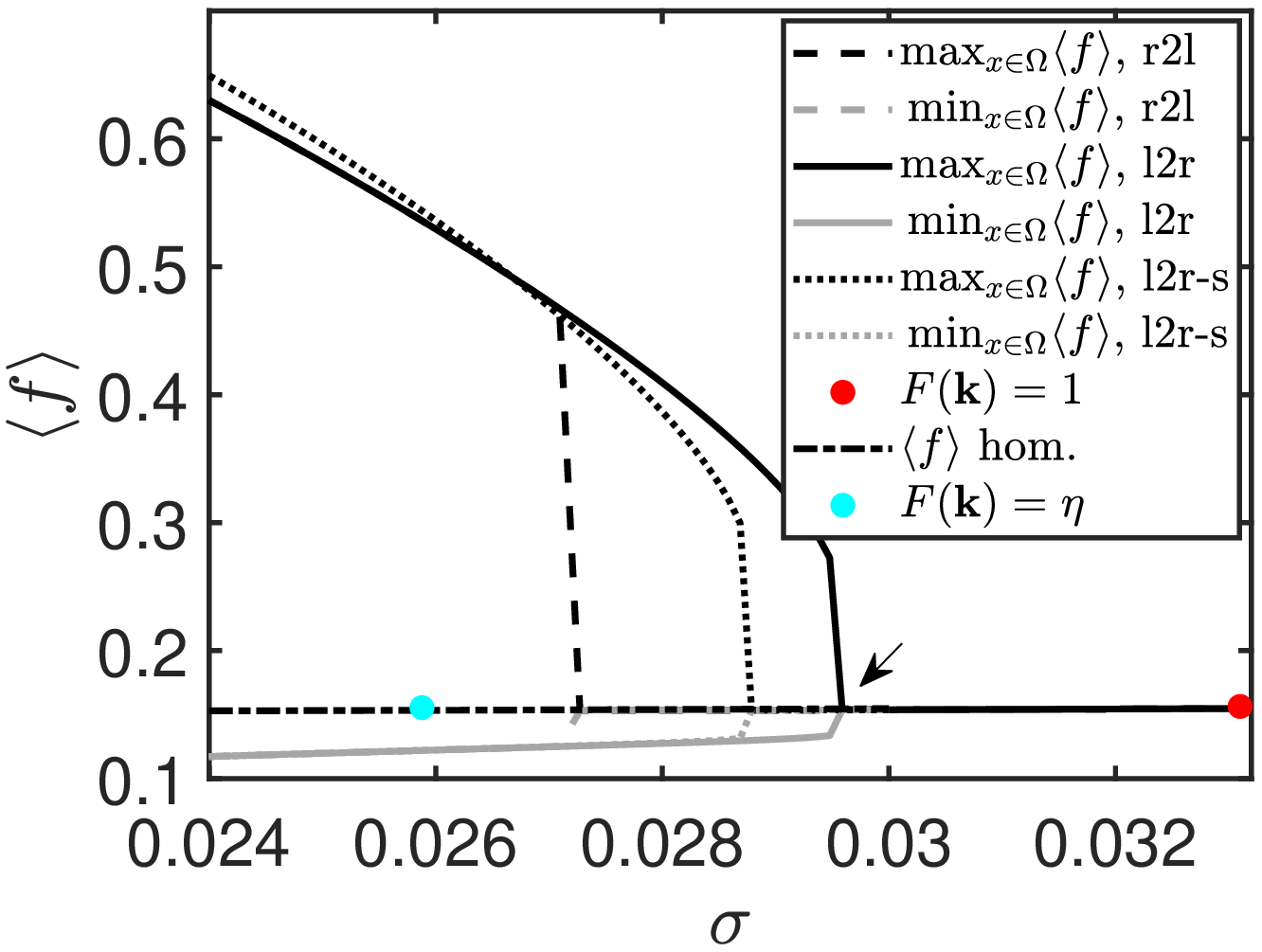}} 
\subfigure
{
\includegraphics[trim={0.0cm -0.6cm -0.5cm -0.6cm},clip,width=0.43\textwidth]{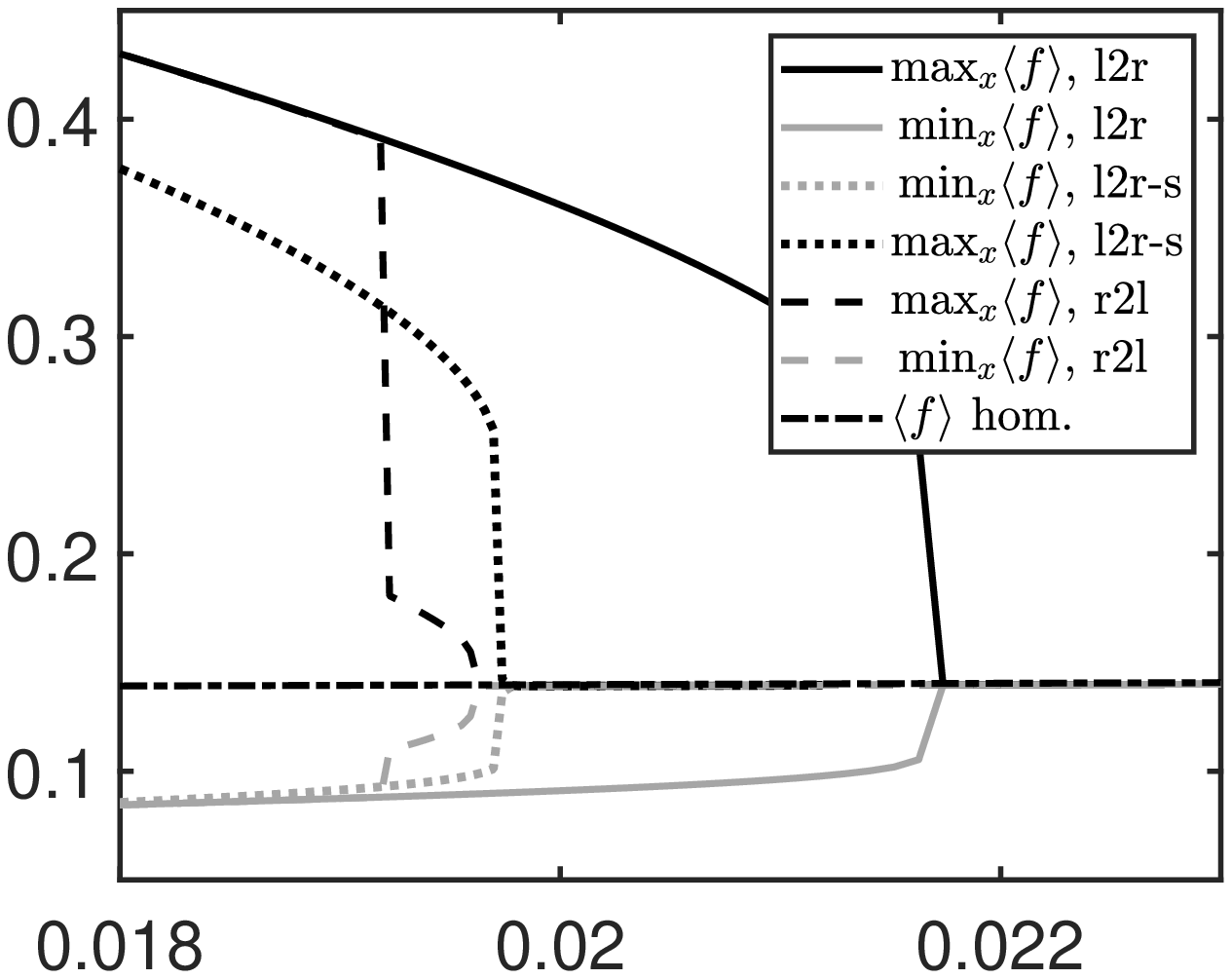}} \\
\subfigure
{
\includegraphics[trim={0cm 0.2cm 0cm 0.5cm},clip,width=0.43\textwidth]{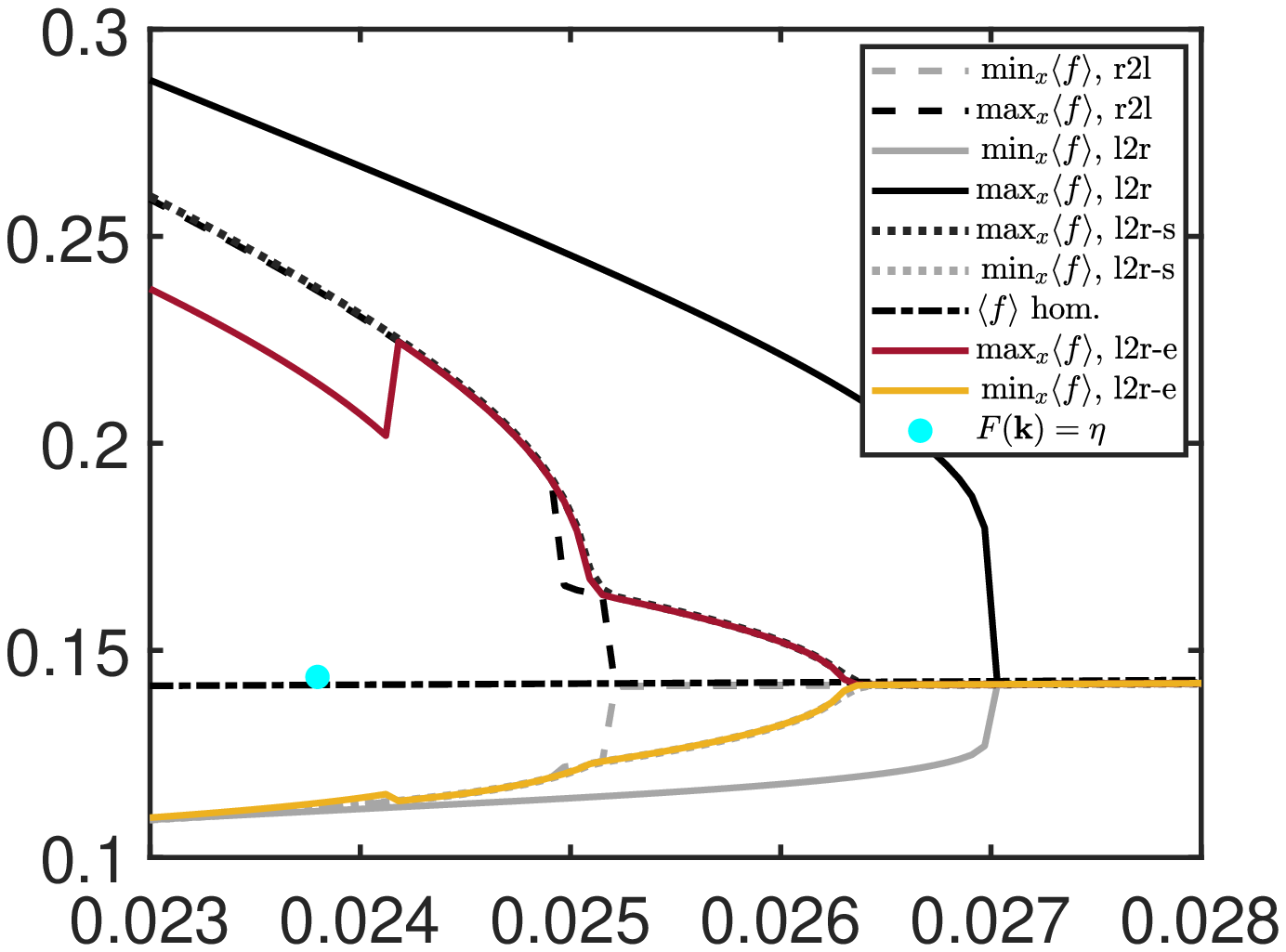}} 
\subfigure
{
\includegraphics[trim={0cm 0.2cm 0cm 0.5cm},clip,width=0.43\textwidth]{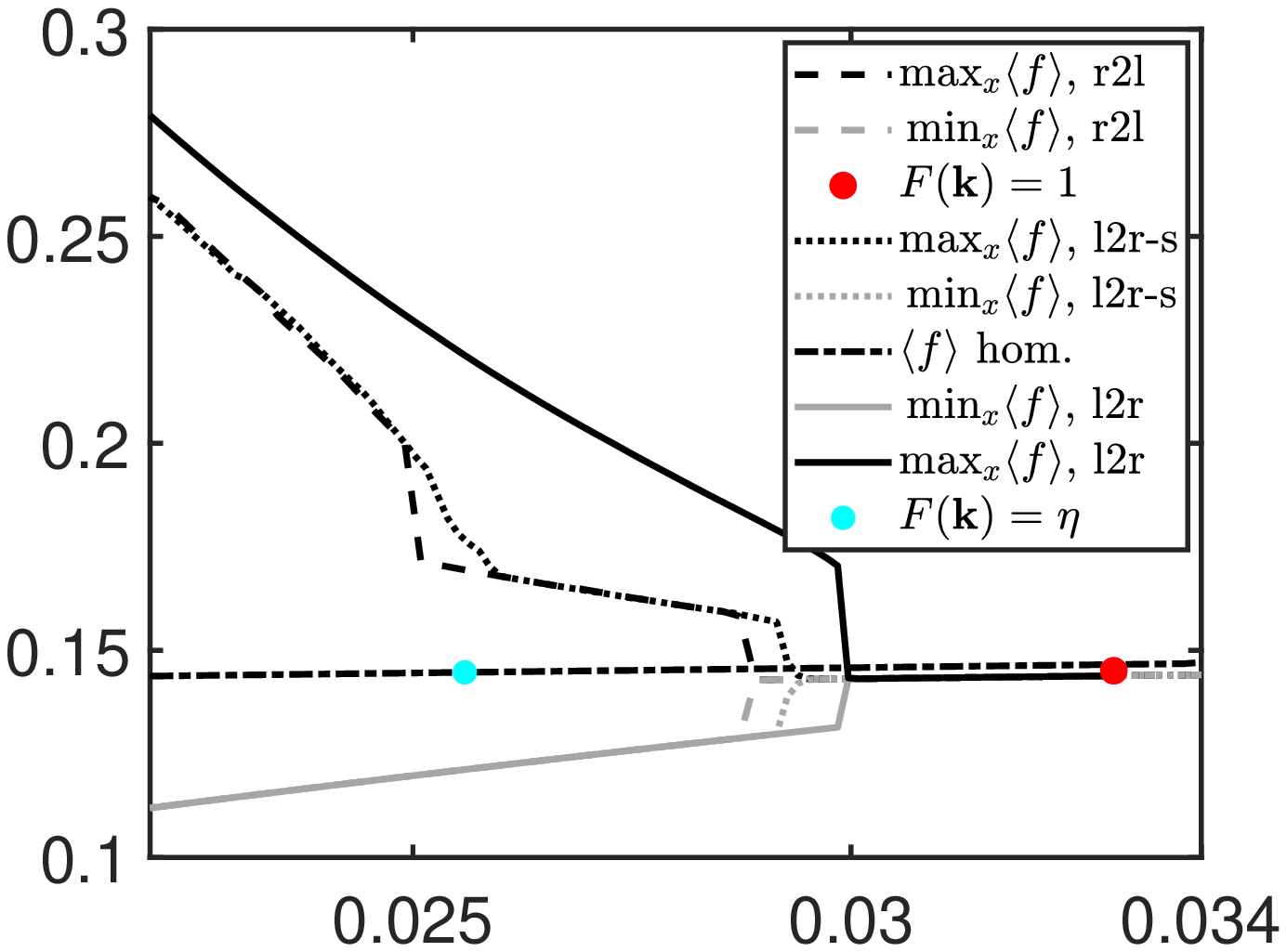}}
\caption{Bifurcation plots of $\langle f \rangle$ with respect to $\sigma$, where $W(|\bx|)=-0.005\cdot 128^2\left(1+\tanh(10-50|\bx|)\right)$ for different modulation functions $\Phi$. The red dots show  the stability threshold $F(k)=1$ for no noise while the blue dots correspond to the stability threshold \eqref{eq:stabilitycondition} with noise, where $\eta = \frac{\sigma_c}{M_\infty(\sigma_c)}$, and $\sigma_c$ is the threshold value for linear stability. Top row: $\Phi (x) = \frac{1}{1+\exp(-15x)}$ (left)=(a) , $\Phi=\Phi_\eps, \eps=0.1$ (right)=(b), bottom row: $\Phi=\Phi_\eps, \eps=0.01$ (left)=(c), and $\Phi(x) = (x)^+$  (right)=(d).
}
\label{fig:bif-zoom}
\end{figure*}

In Fig.~\ref{fig:bif-zoom}(b),(c),(d), we show analogous computations for the case of the modulation function given by $\Phi(x) = (x)^+$ and its regularisations $\Phi(x) = \Phi_\eps(x)$ with $\eps =0.1$ and $\eps =0.01$. Similarly to Fig.~\ref{fig:bif-zoom}(a), we observe a discontinuous phase transition for the full line (l2r) and the dashed line (r2l), and the linear stability blue dot is also to the left of the phase transition point as above. We remark that the blue and the red dots may lie outside the noise intervals in Fig.~\ref{fig:bif-zoom}(b),(c),(d), but they follow the same order. Similar conclusions as above lead to hysteresis phenomena and the possible existence of unstable branches not obtainable with our present numerical approach.

The case of $\eps =0.1$ in Fig.~\ref{fig:bif-zoom}(b) resembles the behaviour observed for the sigmoid function in Fig.~\ref{fig:bif-zoom}(a). The hexagonal-like patterns are the preferred stable configurations both for generic initial data, full line (l2r), and starting with small perturbations of the homogeneous stationary state, dashed line (r2l). Again stripe-like patterns are obtained by choosing specific initial data. Similar branches and the numerical observation that the hexagonal-like pattern is the most stable configuration has already been reported for a neural field model without noise \cite{VCF15}.

This behaviour changes in Fig.~\ref{fig:bif-zoom}(c),(d). The hexagonal-like patterns are still the preferred stable configurations for generic initial data, full line (l2r). However, starting with small perturbations of the homogeneous stationary state, dashed line (r2l), we connect to the stripe-like bifurcation branch, dotted line (l2r).

The bifurcation branches and their dynamics gets richer as the regularisation parameter gets smaller. We observe that for $\eps =0.01$ in Fig.~\ref{fig:bif-zoom}(c), there is an additional branch, dark red line (l2r-e), leading to eye-like patterns as in Fig.~\ref{fig:stat-patterns}(c). This branch jumps to the stripe-like pattern for larger noise values. It is difficult to extract information on the range of noise values $\sigma\in [0.025,0.0265]$ since the branch, dark red line (l2r), does not show a sharp transition point while the dashed line (r2l) does. However, this becomes much clearer in the limiting case of the positive part in Fig.~\ref{fig:bif-zoom}(d). We observe two sharper discontinuous transition points in the stripe-like branch, leading to an intermediate pure-stripe branch, $\sigma\in [0.025,0.028]$, before jumping to the homogeneous state, see the dashed line (r2l) and the dotted line (l2r).

\begin{figure}[ht]
\centering
\subfigure
{
\includegraphics[trim={0.05cm 0.cm 1.2cm 0.4cm},clip, width=0.252\textwidth]{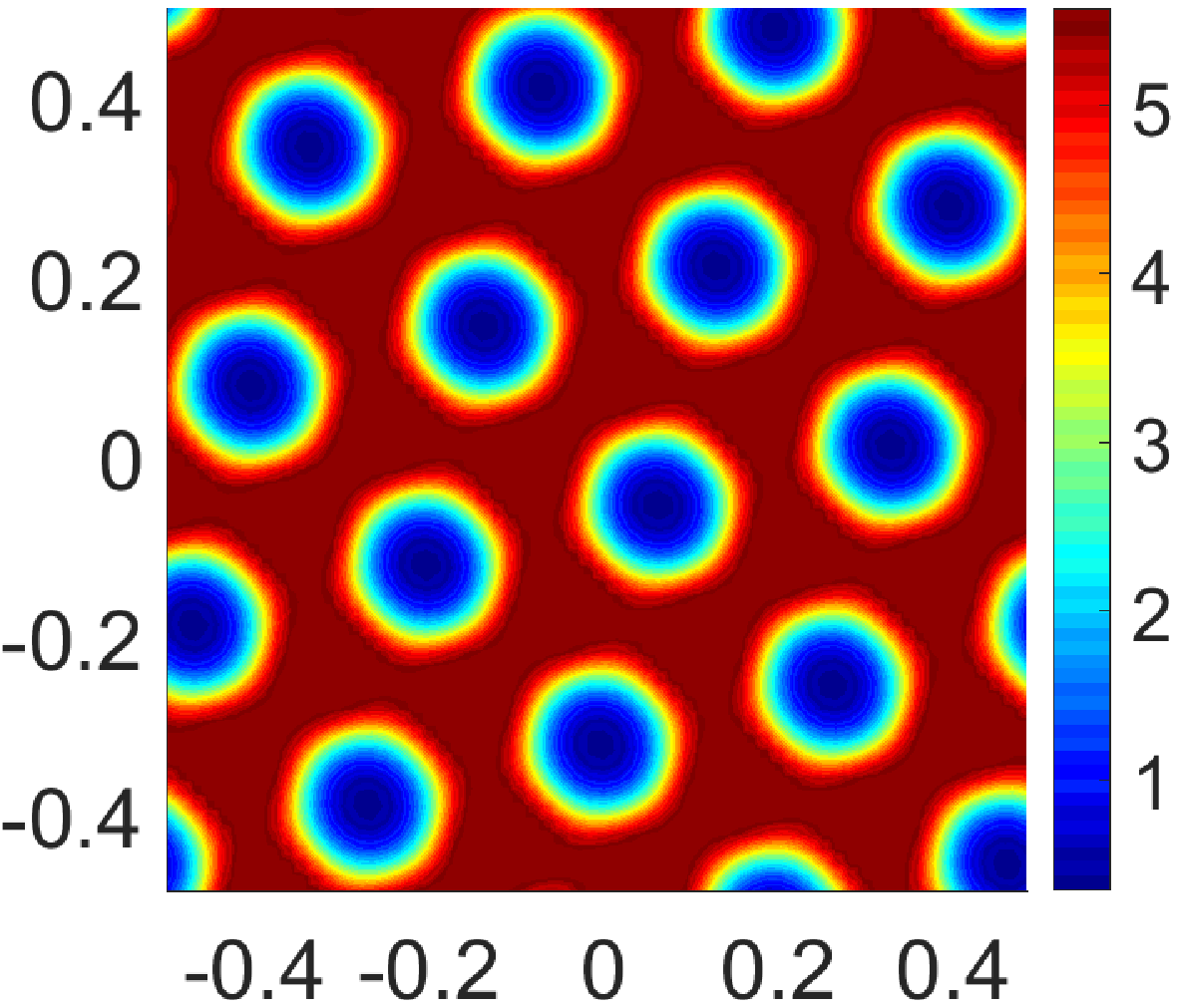}}
\subfigure
{
\includegraphics[trim={0.05cm 0.cm 1.2cm 0.4cm},clip, width=0.254\textwidth]{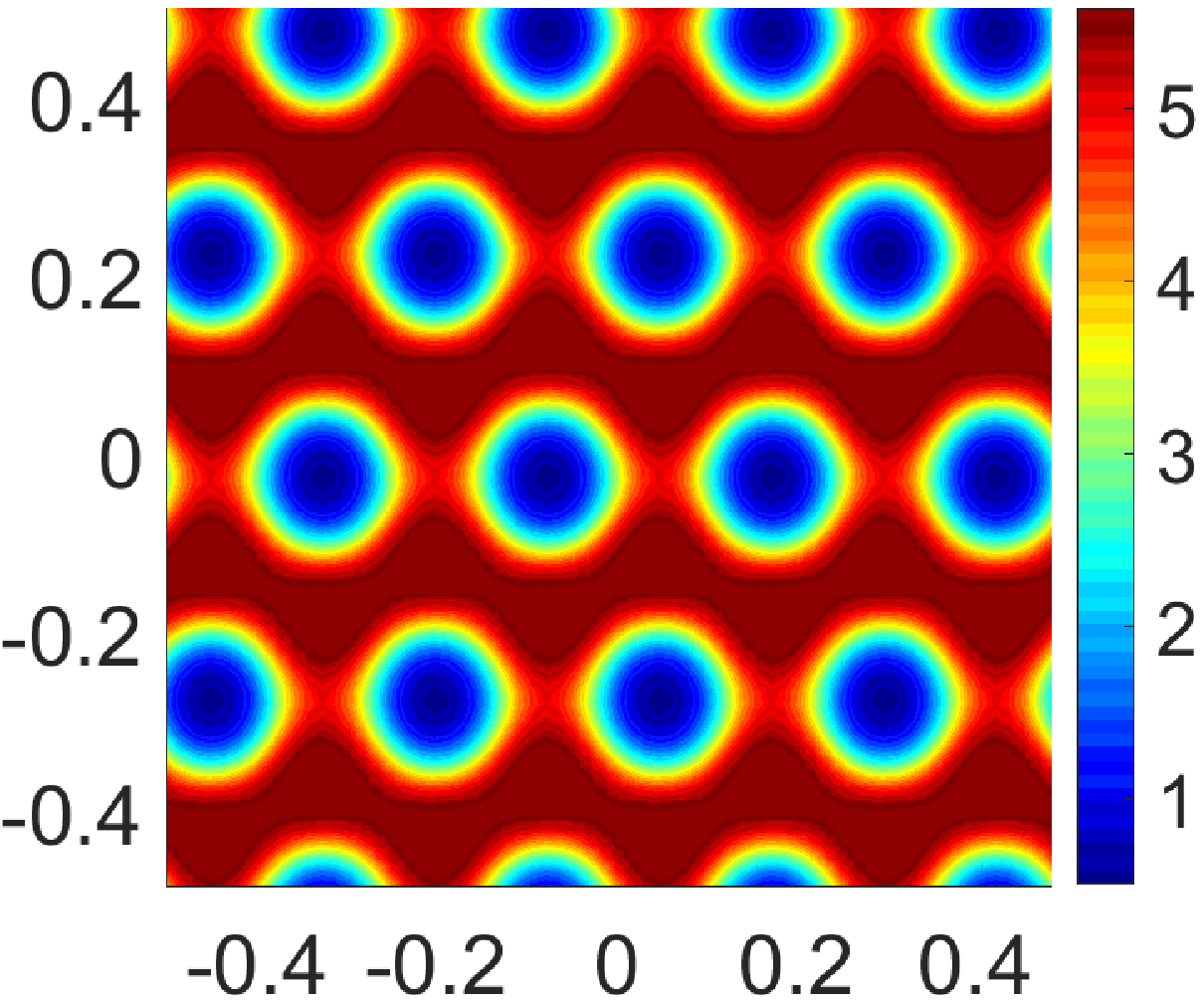}}
\subfigure
{
\includegraphics[trim={0.05cm 0.cm 1.2cm 0.4cm},clip,width=0.252\textwidth]{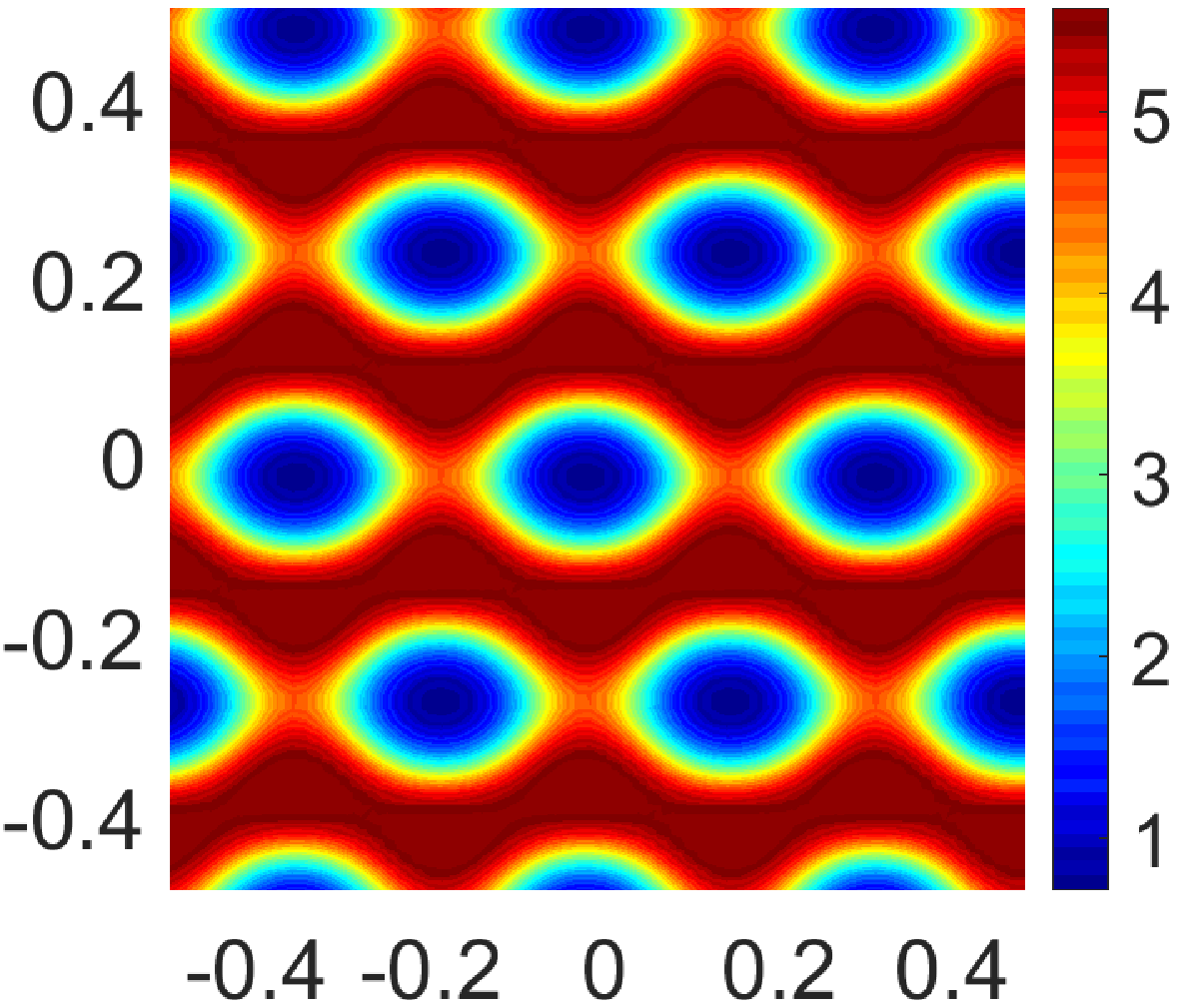}}
\caption{Stationary patterns of $f^\beta(s=0)$ at $\sigma=0.022$ for $\Phi(x) = \Phi_\eps(x), \eps =0.01$ with $W(|\bx|)=-0.005\cdot 128^2\left(1+\tanh(10-50|\bx|)\right)$: hexagonal-like (a), stripe-like (b), eye-like (c). Left to right: black, dotted, and red line in Fig.~\ref{fig:bif-zoom}(c).}
\label{fig:stat-patterns}
\end{figure}

\section{Concluding remarks}

The first conclusion of our analysis is that the mean-field limit of the grid cell model \eqref{eq:motherODE} with constant external input introduced by \cite{burakfiete,coueyetal,BurakFieteNoise}, presents phase transitions driven by the noise strength as demonstrated in Figs.~\ref{fig:bif-zoom} and \ref{fig:stat-patterns}.
This behaviour resembles the phenomena appearing in the classical Kuramoto model \cite{Kura,Kura2,RevModPhys,kuramoto-transitions,CGPS20} for synchronisation and other neural field models \cite{TouboulSIAM} in the computational neuroscience literature. 

It is shown that the homogeneous in space stationary state is linearly unstable for small noise strength, similarly to basic ring and neural field models \cite{MB,KE13,BAC19}. We numerically analysed the bifurcation diagram of stationary patterns showing the appearance of different branches identified by their symmetries, see Figs.~\ref{fig:bif-zoom} and \ref{fig:stat-patterns}. Our numerical experiments with random initial data demonstrate that the stationary hexagonal-like pattern in space of the activity level of neurons in Fig.~\ref{fig:stat-patterns}(a), leading to the solid black bifurcation branches in Fig.~\ref{fig:bif-zoom}, has the largest basin of attraction.  Moreover, the numerical simulations indicate that there is a sharp transition in the mean activity level together with a hysteresis phenomenon suggesting a discontinuous phase transition. Whether more stationary network patterns exist is another interesting topic.

The crucial implication of this phase transition on the rats' navigation path is that the larger the noise the less localised are the spatial firing fields of each neuron. This can be observed in the bottom row of Fig.~\ref{fig:single} which shows that the firing field (coloured in red) gets denser as the noise increases. Moreover, there is a sharp value of the noise after which there is no localisation at all, leading the rats to not being able to orientate themselves in physical space. In other words, the point of transition from a homogeneous pattern (all neurons have the same mean activity level) to a non-homogeneous pattern (neurons at different locations in the network have different mean activity levels) gives an upper bound for the noise strength for which single grid cells no longer can fire in a hexagonal pattern in physical space when connected with the rats movements through \eqref{eq:ext}.

With a hexagonal network activity configuration as in Fig.~\ref{fig:stat-patterns}(a), a single neuron can create hexagonal neural field patterns in physical space as in the third row of Fig.~\ref{fig:single}. Exactly how the firing fields in physical space of a single neuron are affected by initial network activity patterns as the ones in Fig.~\ref{fig:stat-patterns}(b),(c) remains to be investigated. 

From the methodological viewpoint, we remark that as the bifurcation branches are computed using a numerical approximation of the PDE \eqref{eq:PDE}--\eqref{eq:PDEbc}, they can differ slightly from the actual branches of the PDE itself. To study bifurcations and phase transitions of the nonlinear PDE analytically will require sophisticated mathematical tools, and they will be investigated elsewhere.

From the computational neuroscience viewpoint, we expect that noise driven phase transitions will also naturally appear in related attractor dynamic models as the ones in \cite{BurakFieteNoise,AB20}. Instability of homogeneous stationary network patterns should also play an important role therein. Additional investigations of more realistic models of coupled place and grid cells are needed. This will allow to connect with experiments and further contribute to the challenge of how noise affects network dynamics in  \cite[Future Issue 3]{tenyears}.

\section*{Acknowledgements}{The authors are grateful to Edvard I. Moser for helpful discussions and advice. 
JAC was supported by the Advanced Grant Nonlocal-CPD (Nonlocal PDEs for Complex Particle Dynamics: Phase Transitions, Patterns and Synchronization) of the European Research Council Executive Agency (ERC) under the European Union's Horizon 2020 research and innovation programme (grant agreement No. 883363).
HH was supported in part by the project Waves and Nonlinear Phenomena (WaNP), no 90104100, from the Research Council of Norway. Parts of this work was conducted while SS was employed at the Department of Mathematical Sciences, NTNU.}

\appendix

\section{Numerical approach}
\label{app:numerics}
This section contains details on the numerical approach. We re-write the system \eqref{eq:PDE} as 
\begin{align*}
\tau \frac{\partial f^\beta}{\partial t} =\frac{\partial}{\partial s}\Bigg( f^\beta \frac{\partial}{\partial s} \Bigg[\frac{1}{2}\left(\Phi\big(W,f,B^\beta\big) -s\right)^2 + \sigma \log f^\beta \Bigg]
\Bigg),
\end{align*}
and discretise the system using the first-order (in time $t$ and in $s$) version of the finite volume numerical method in \cite{CCH} at every $\bx\in\Omega$ on the spatial mesh.

In all figures in this paper, the time scale is set to $\tau = 10$ms, and the computations are run on a $64 \times 64 \times 64$ $(\bx,s)$-grid for each neuron type $\beta$. The activity $s$ is calculated on the interval $[0,1.3]$ and $\bx \in [-0.5,0.5]^2$. We impose no-flux boundary conditions in $s$ and periodic in $\bx$. We carefully checked that the support of the distribution in $s$ is essentially inside the interval $[0,1.3]$ for all times such that imposing no-flux boundary condition on the right end is a good approximation for $s\in [0,\infty)$.
The initial distribution is prescribed by randomly setting the activity at one percent of the grid points $\{\bx_n\}_n$, denoted $\{\bx_{n_i}\}_i$, in the four sheets to 1, i.e.,
\begin{align*}
f^{\beta}_0(\bx_n,s) = \sum_{i} \delta_{\bx_{n_i},1}(\bx,s) + \sum_{n \notin \{n_i\}_i }\delta_{\bx_n, 0}(s), 
\end{align*}
with $\beta =1,\dots,4$, where the positions $\bx_{n_i}$ are chosen randomly. The spatial preference is set by shifting the connectivity matrix determined by $W$ one cell to the north, south, west, or east, respectively.  

All figures in Section \ref{sec:stability}  are computed with $B=3$. The value of each bifurcation branch in Fig.~\ref{fig:bif-zoom} is, for each $\sigma$, found by a continuation method in $\sigma$ running the simulation for a minimum time of $2000$ms. The calculation of the branch value is stopped when either the numerical time derivative satisfies the numerical equivalent of 
\begin{align*}
  \frac{d}{dt}\bigg\|\sum_\beta f^\beta \bigg\|_{L^1(\Omega \times [0,\infty))} \leq 10^{-8},
\end{align*} 
or the maximum time of $6000\,$ms is reached. The respective noise intervals are divided into at least 100 points. 

\subsection{Grid refinement of the numerical method}

To briefly check the robustness of the numerical method described in the manuscript, we performed a standard grid refinement analysis within a computationally feasible computational range from $n=64$ to $n=256$ cells in $\bx$ and $s$. 
For $n=256$, the initial data $f^{\beta,0}_\Delta(x)|_{s=1}$ is randomly set to $1/256$ at 1\% of the locations $x$, and $f^{\beta,0}_\Delta(x)|_{s=0}$ is set to $1/256$ on the complement. The rest is set to zero. On the coarser grids, we have used piecewise averages of this initial data. In Fig.~\ref{fig:gridrefinement}, the solutions on a grid consisting of $n^3$ cubes, with $n=32, 64, 128,$ and $256$, are plotted. The corresponding $L^1$ and $L^2$ errors can be found in the table below. The coarser solutions ($n=32,64,128$) are compared to the one computed on the grid consisting of $256^3$ cubes. It can be observed from the table and the figure that the numerical method used is stable when refining the grid. 
\begin{figure}[ht]
\centering
\subfigure[$n=32$]{
\includegraphics[trim={0cm 1.cm 0cm 0cm},clip, width=0.23\textwidth]{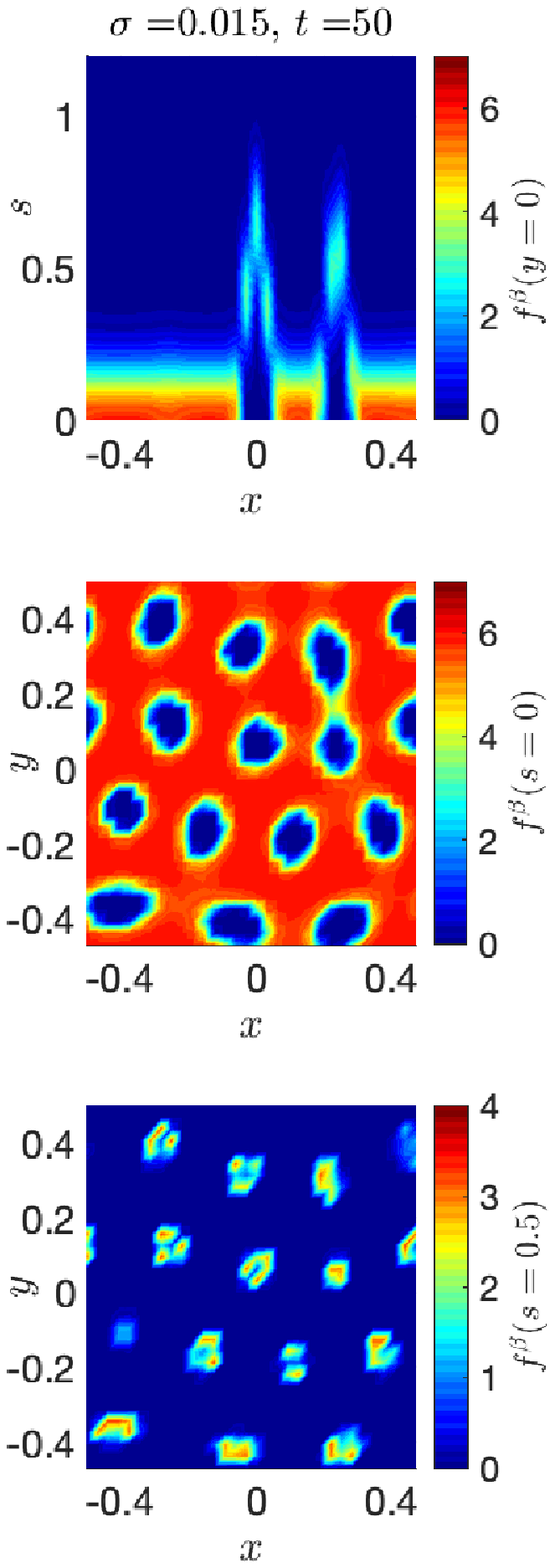}}
\subfigure[$n=64$]{
\includegraphics[trim={0cm 1.cm 0cm 0cm},clip,width=0.23\textwidth]{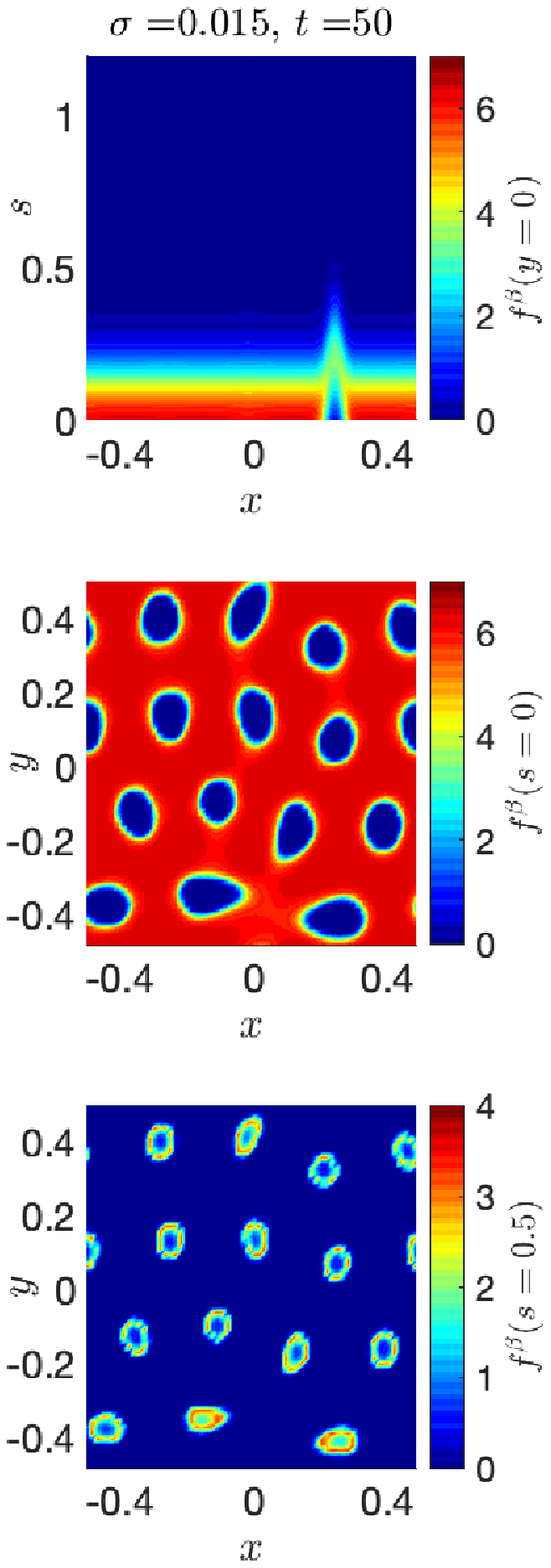}}
\subfigure[$n=128$]{
\includegraphics[trim={0cm 1.cm 0cm 0cm},clip,width=0.23\textwidth]{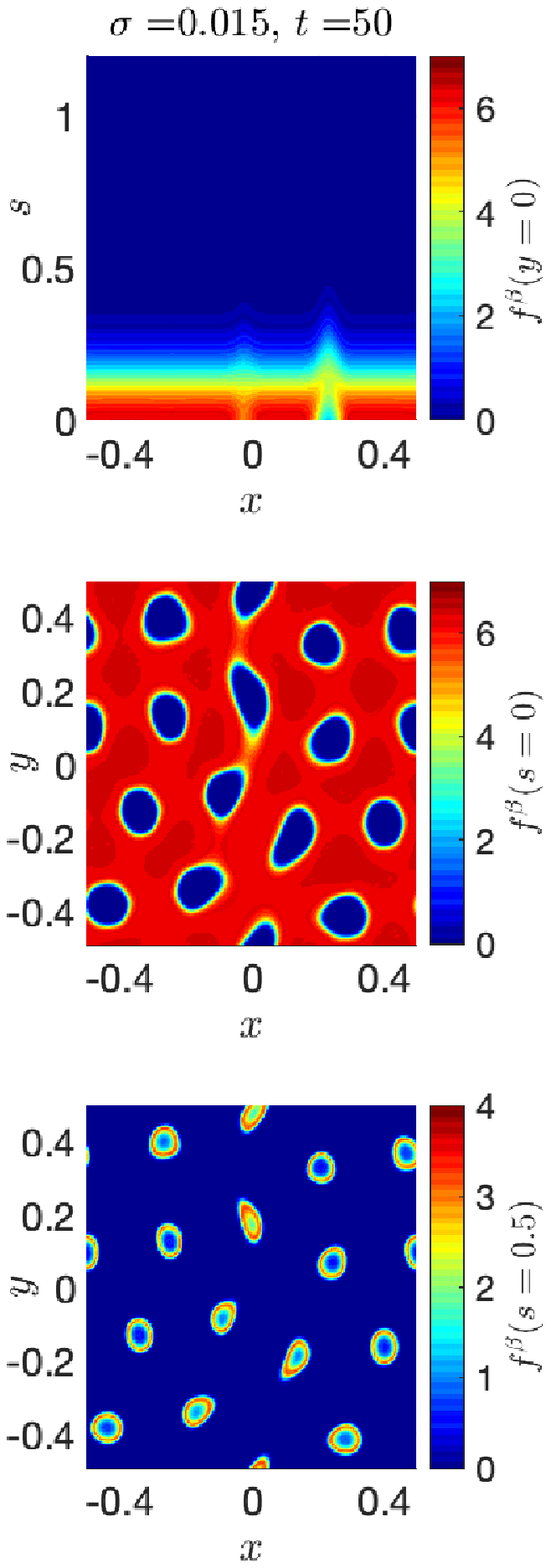}}
\subfigure[$n=256$]{\includegraphics[trim={0cm 1.cm 0cm 0cm},clip,width=0.23\textwidth]{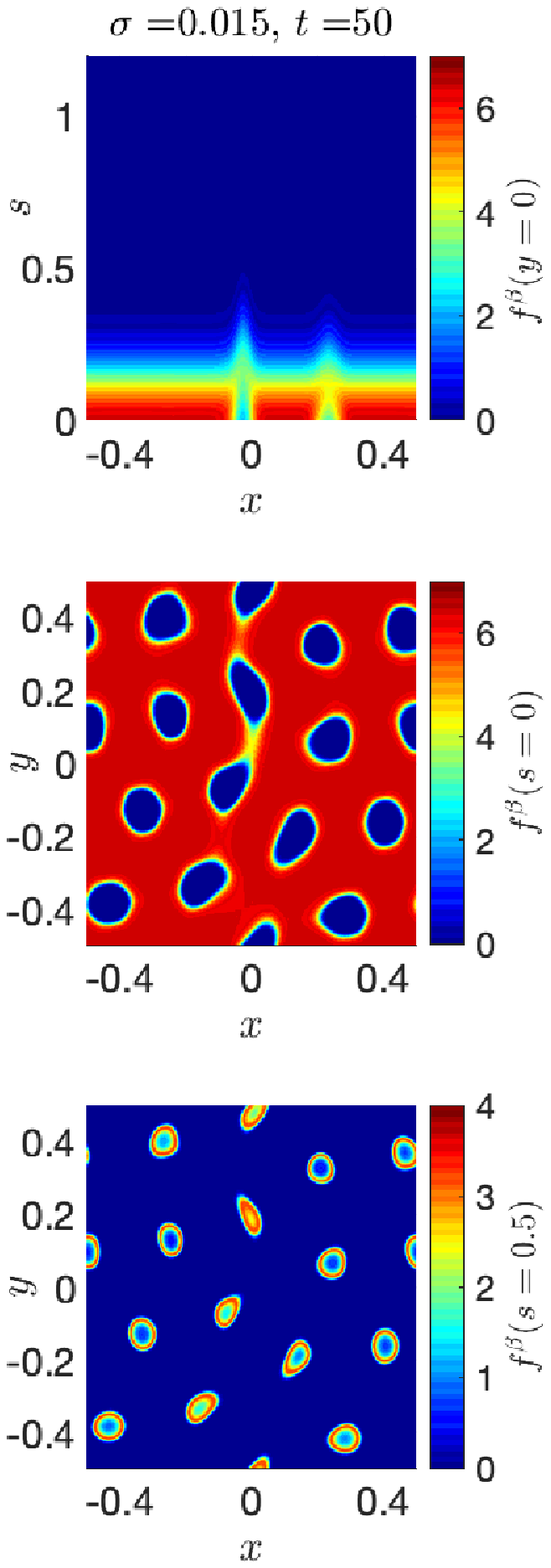}}
\caption{Grid refinement for $\sigma = 0.015$, $B=3$, $W(|\bx|)=-0.005 \cdot 128^2(1+1\tanh(50(0.2-|\bx|)))$, and $\Phi(x) = 1/(1+\exp(-15x))$, at $t=50$ms.}
\label{fig:gridrefinement}
\end{figure}

\begin{table}[ht!]
\centering
\begin{tabular}{r|l|l|c|c}
$n$ & $L^1$ & $L^2$ & OOC $L^1$ & OOC $L^2$ \\
\hline
32 & 0.1033 & 0.1993 & $-$ & $-$ \\
64 & 0.0565 & 0.1085 & 0.87 & 0.88 \\
128 & 0.0098 & 0.0175 & 2.53 & 2.64 \\
\end{tabular}
\caption{$L^1$ and $L^2$ (in $\bx$) errors and order of convergence (OOC) of the mean for $n=32, 64, 128$.}
\end{table}


\bibliographystyle{habbrv}
\bibliography{references}

\end{document}